\numberwithin{equation}{section}
\patchcmd{\ttlh@hang}{\parindent\z@}{\parindent\z@\leavevmode}{}{}
\patchcmd{\ttlh@hang}{\noindent}{}{}{}
\newtheorem{theorem}{Theorem}[section]
\newtheorem{lemma}[theorem]{Lemma}
\newtheorem{proposition}[theorem]{Proposition}
\newtheorem{corollary}[theorem]{Corollary}
\theoremstyle{definition}
\newtheorem{definition}[theorem]{Definition}
\theoremstyle{remark}
\newtheorem{remark}[theorem]{Remark}
\DeclareMathOperator{\PW}{PW}
\DeclareMathOperator*{\tr}{tr}
\DeclareMathOperator*{\supp}{supp}
\newcommand{\R}{\mathbb{R}}
\newcommand{\N}{\mathbb{N}}
\newcommand{\Z}{\mathbb{Z}}
\newcommand{\T}{\mathbb{T}}
\newcommand{\ve}{\varepsilon}
\newcommand{\lan}{\langle}
\newcommand{\ran}{\rangle}
\title[On exponential frames near the critical density]{On exponential frames near the critical density}
\author{Marcin Bownik}
\address{Department of Mathematics, University of Oregon, Eugene, OR 97403--1222, USA}
\email{mbownik@uoregon.edu}
\author{Jordy Timo van Velthoven}
\address{Faculty of Mathematics,
University of Vienna, 
Oskar-Morgenstern-Platz 1,
1090 Vienna, Austria}
\email{jordy-timo.van-velthoven@univie.ac.at}
\subjclass[2020]{42A65, 42C30, 43A70}
\keywords{Beurling density, exponential system, frame bounds, locally compact abelian groups}
\begin{document}

\begin{abstract}
Given a compact set $\Omega \subseteq \R$ of Lebesgue measure $|\Omega|$ and $\varepsilon > 0$, we show the existence of a set $\Lambda \subseteq \R$ of uniform density $D (\Lambda) \leq (1+\varepsilon) |\Omega|$ such that the exponential system $\{ \exp(2\pi i \lambda \cdot) \mathds{1}_{\Omega}: \lambda \in \Lambda \}$ is a frame for $L^2 (\Omega)$ with frame bounds $A |\Omega|, B |\Omega|$ for constants $A,B$ only depending on $\varepsilon$. This solves a problem on the frame bounds of an exponential frame near the critical density posed by Nitzan, Olevskii and Ulanovskii.
 We also prove an extension to locally compact abelian groups, which improves a result by Agora, Antezana and Cabrelli by providing frame bounds involving the spectrum.
\end{abstract}

\maketitle

\section{Introduction}
Let $\Omega \subseteq \R$ be a compact set. Given a discrete set $\Lambda \subseteq \R$, consider the associated exponential functions 
\[
 e_{\lambda} (t) = e^{2 \pi i \lambda t}, \quad t \in \mathbb{R}, \; \lambda \in \Lambda.
\]
The system $\{e_{\lambda} \mathds{1}_{\Omega}\}_{\lambda \in \Lambda}$ is said to be a \emph{frame} for $L^2 (\Omega)$ if there exist constant $A, B>0$, called \emph{frame bounds}, satisfying
\[
 A \| f \|^2 \leq \sum_{\lambda \in \Lambda} |\langle f, e_{\lambda} \rangle |^2 \leq B \| f \|^2 \quad \text{for all} \quad f \in L^2 (\Omega).
\]
For example, an orthogonal basis $\{e_{\lambda} \mathds{1}_{\Omega}\}_{\lambda \in \Lambda}$ is a frame with bounds $A=B=|\Omega|$, where $|\Omega|$ denotes the Lebesgue measure of $|\Omega|$. 
 Landau's necessary density conditions \cite{landau1967necessary} assert that if $\{e_{\lambda} \mathds{1}_{\Omega}\}_{\lambda \in \Lambda}$ is a frame for $L^2 (\Omega)$, then
 \[
  D^- (\Lambda) := \liminf_{r \to \infty} \inf_{x \in \mathbb{R}} \frac{\# (\Lambda \cap [x, x+r])}{r} \geq |\Omega|.
 \]
In addition, if $\{e_{\lambda} \mathds{1}_{\Omega}\}_{\lambda \in \Lambda}$ is a Riesz sequence in $L^2 (\Omega)$, then also
 \[
  D^+ (\Lambda) := \limsup_{r \to \infty} \sup_{x \in \mathbb{R}} \frac{\# (\Lambda \cap [x, x+r])}{r} \leq |\Omega|.
 \]
In particular, any Riesz basis $\{e_{\lambda} \mathds{1}_{\Omega}\}_{\lambda \in \Lambda}$ for $L^2 (\Omega)$ has density  $D^- (\Lambda) = D^+ (\Lambda) = |\Omega|$. In general, a discrete set $\Lambda \subseteq \mathbb{R}$ is said to be of \emph{uniform density} if $D^- (\Lambda) = D^+ (\Lambda)$ in which case we denote its density simply by $D(\Lambda)$.

Exponential frames for $L^2(\Omega)$ are easily shown to exist. For example, the set $\Lambda = \mathbb{Z}$ defines an orthonormal basis $\{e_{\lambda} \mathds{1}_{[0,1]} \}_{\lambda \in \Lambda}$ for $L^2 ([0, 1])$,
and hence defines a frame $\{e_{\lambda} \mathds{1}_{\Omega}\}_{\lambda \in \Lambda}$ for $L^2 (\Omega)$ for any set $\Omega \subseteq [0, 1]$. However, the construction of exponential frames for $L^2(\Omega)$ with density close to the critical value $|\Omega|$ or with frame bounds comparable to $|\Omega|$  is remarkably nontrivial and has been considered, among others, in \cite{nitzan2013few, nitzan2016exponential, agora2015multi, marzo2006riesz}. More precisely, the existence of frames $\{e_{\lambda} \mathds{1}_{\Omega}\}_{\lambda \in \Lambda}$ with density $D^- (\Lambda) \leq (1+\ve) |\Omega|$ for a given $\ve > 0$ has been shown in \cite{agora2015multi, nitzan2013few, marzo2006riesz}, whereas the existence of frames $\{e_{\lambda} \mathds{1}_{\Omega}\}_{\lambda \in \Lambda}$ with frame bounds $c |\Omega|, C|\Omega|$ for absolute constants $c,C>0$ has been shown in \cite{nitzan2016exponential}. The question whether there exist frames possessing both of the aforementioned properties was posed by Nitzan, Olevskii and Ulanovskii  \cite{nitzan2013few}. Explicitly, it is asked\footnote{The problem \cite[Section 6.2, Open problem 1]{nitzan2013few} concerns the equivalent problem of sampling in Paley-Wiener spaces; see Corollary \ref{cor:paley}.} in \cite[Section 6.2, Open problem 1]{nitzan2013few} whether \cite[Theorem 1]{nitzan2013few} and \cite[Theorem 3]{nitzan2013few} can be combined, meaning that, given $\varepsilon > 0$, there exists $\Lambda \subseteq \R$ with density 
\[
D^- (\Lambda) \leq (1+\ve)|\Omega|
\] 
and such that 
\[
A(\varepsilon) |\Omega|  \| f \|^2  \leq \sum_{\lambda \in \Lambda} |\langle f, e_{\lambda}  \rangle |^2 \leq B(\varepsilon) |\Omega|  \| f \|^2 \quad \text{for all} \quad f \in L^2 (\Omega),
\]
is fulfilled with absolute constants $A(\ve)$ and $B(\ve)$ depending only on $\ve$. 
 The aim of this paper is to answer this question. Our first main result is as follows.

\begin{theorem} \label{thm:intro}
Let $\varepsilon > 0$. Given a compact set $\Omega \subseteq \R$,  there exists $\Lambda \subseteq \mathbb{R}$ of uniform density $D(\Lambda) \leq (1+\varepsilon) |\Omega|$ such that
\begin{align} \label{eq:int}
 A(\varepsilon) |\Omega| \| f \|^2  \leq \sum_{\lambda \in \Lambda} |\langle f, e_{\lambda} \rangle |^2 \leq B(\varepsilon) |\Omega| \| f \|^2  \quad \text{for all} \quad f \in L^2 (\Omega)
\end{align}
for some positive constants $A(\varepsilon), B(\varepsilon)$  depending only on $\varepsilon$. Moreover, if $\Omega$ is contained in an interval of length $d>0$, then $\Lambda$ can be chosen to satisfy $\Lambda \subseteq d^{-1} \mathbb{Z}$. 
\end{theorem}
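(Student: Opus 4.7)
The plan is to reduce to the case $\Omega \subseteq [0,1]$ by a scaling argument, and then to establish the latter by partitioning $\mathbb{Z}$ via a finite-dimensional application of the Marcus--Spielman--Srivastava solution to the Kadison--Singer problem.

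For the reduction, given a general relatively compact $\Omega \subseteq \R$, I would translate and dilate so that $\Omega \subseteq [0,R]$ for some $R \geq 1$, and put $\Omega' := \Omega/R \subseteq [0,1]$, with $|\Omega'| = |\Omega|/R$. If the moreover case supplies $\Lambda' \subseteq \mathbb{Z}$ with $D(\Lambda') \leq (1+\varepsilon)|\Omega'|$ and frame bounds $A(\varepsilon)|\Omega'|,\,B(\varepsilon)|\Omega'|$ for $L^{2}(\Omega')$, then $\Lambda := \Lambda'/R \subseteq (1/R)\mathbb{Z}$ has uniform density $R\cdot D(\Lambda') \leq (1+\varepsilon)|\Omega|$, and the unitary $U \colon L^{2}(\Omega') \to L^{2}(\Omega)$, $Uf(t) = R^{-1/2} f(t/R)$, which satisfies $U(e_{\lambda'}\mathds{1}_{\Omega'}) = R^{-1/2}e_{\lambda'/R}\mathds{1}_{\Omega}$, carries the frame over to $L^{2}(\Omega)$ with bounds $A(\varepsilon)|\Omega|,\,B(\varepsilon)|\Omega|$. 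So it suffices to handle the moreover case.

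For $\Omega \subseteq [0,1]$, the system $\{e_{k}\mathds{1}_{\Omega}\}_{k\in\mathbb{Z}}$ is a Parseval frame for $L^{2}(\Omega)$ (inherited from the orthonormal basis on $L^{2}([0,1])$) with squared norms $\|e_{k}\mathds{1}_{\Omega}\|^{2} = |\Omega|$. When $|\Omega| \geq 1/(1+\varepsilon)$, the theorem of Nitzan, Olevskii and Ulanovskii \cite{nitzan2016exponential} already supplies a suitable $\Lambda \subseteq \mathbb{Z}$ of density at most $1 \leq (1+\varepsilon)|\Omega|$. Otherwise, set $r := \lceil 1/((1+\varepsilon)|\Omega|) \rceil \geq 2$. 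I would then apply the $KS_{r}$ formulation of the Weaver conjecture (proven by Marcus, Spielman and Srivastava) to a finite-dimensional periodization of this Parseval frame on $\mathbb{Z}/(rN)$ for large integer $N$, yielding a partition $\mathbb{Z}/(rN) = J_{1} \sqcup \cdots \sqcup J_{r}$ for which each $\{e_{k}\mathds{1}_{\Omega}\}_{k \in J_{i}}$ has frame-operator bounds of order $(\sqrt{1/r}\pm\sqrt{|\Omega|})^{2} \asymp (\sqrt{1+\varepsilon}\pm 1)^{2}|\Omega|$. By pigeonhole, at least one block $J_{i_{0}}$ has $|J_{i_{0}}| \leq N$; lifting this by $rN$-periodicity to $\Lambda^{(N)} := J_{i_{0}} + rN\mathbb{Z} \subseteq \mathbb{Z}$ produces a uniformly dense subset of density $\leq 1/r \leq (1+\varepsilon)|\Omega|$.

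The principal obstacle is passing this finite-dimensional partition to an infinite uniformly dense subset of $\mathbb{Z}$ on which the full infinite frame retains the bounds $A(\varepsilon)|\Omega|,\,B(\varepsilon)|\Omega|$. I expect to handle this by a Tychonoff compactness argument on $\{0,1\}^{\mathbb{Z}}$ together with a diagonal extraction as $N \to \infty$: this yields a uniformly dense limit set $\Lambda \subseteq \mathbb{Z}$ along which the spectral frame bounds, being stable under the pointwise convergence of the indicators of $\Lambda^{(N)}$, are preserved. The LCA-group extension alluded to in the abstract should then follow by adapting the same periodic-partition-plus-MSS strategy to the appropriate multi-tile setting, delivering the announced improvement over \cite{agora2015multi}.
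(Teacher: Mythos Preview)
Your reduction to $\Omega \subseteq [0,1]$ is fine and matches the paper. The substantive gap is in the partition step: the $KS_r$ form of Marcus--Spielman--Srivastava gives, for a Parseval frame with $\|v_i\|^2 \le \delta$, a partition into $r$ blocks each satisfying the \emph{upper} bound $\bigl\|\sum_{i\in J_j} v_i v_i^*\bigr\| \le (1/\sqrt{r}+\sqrt{\delta})^2$. It does not yield the two-sided bound $(\sqrt{1/r}\pm\sqrt{\delta})^2$ you assert. The only lower bound one gets is via complementation, namely $1-(r-1)(1/\sqrt{r}+\sqrt{\delta})^2$, and in your regime $r \approx 1/((1+\varepsilon)|\Omega|)$, $\delta=|\Omega|$, this quantity is approximately $1-(1+1/\sqrt{1+\varepsilon})^2 < 0$, hence vacuous. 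So the pigeonholed block $J_{i_0}$ need not be a frame at all. Separately, the diagonal/Tychonoff step is also problematic: weak limits of index sets preserve frame bounds but \emph{not} density near the critical value; the paper states this explicitly, and it is precisely the reason the problem from \cite{nitzan2013few} was open. Your sets $\Lambda^{(N)}=J_{i_0}+rN\Z$ are $rN$-periodic with at most $N$ points per period, so on windows of length $\ll rN$ their local density is uncontrolled, and nothing prevents the limit set from having $D^+(\Lambda)>1/r$.

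The paper avoids both issues by using a different finite-dimensional input and no limit. It first replaces $\Omega$ by a union $\bigcup_{k\in I}[k/m,(k+1)/m]$ with $|I|=n$, then works with the $m\times n$ Fourier submatrix. Instead of an MSS partition, it uses the Batson--Spielman--Srivastava sparsification $\sum s_i v_i v_i^* \asymp I$ with $\#\{s_i\ne 0\}\le \lceil (1+\varepsilon)n\rceil$; the novelty (Theorem~\ref{thm:bss2}, via a scalable-frames result) is an a priori lower bound on the nonzero weights $s_i$, which lets one discard the weights and obtain an \emph{unweighted} subframe $\{v_i\}_{i\in J}$ with two-sided bounds $c(\varepsilon)\frac{n}{m}\le \cdot \le C(\varepsilon)\frac{n}{m}$ (Lemma~\ref{lem:batson}). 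One then sets $\Lambda=\bigcup_{j\in J}(j+m\Z)$ and verifies the frame inequality for $L^2(\Omega)$ by an exact Parseval computation, with no compactness argument and density $D(\Lambda)=\#J/m\le (1+\varepsilon)|\Omega|$ by construction. If you want to repair your approach, the missing ingredient is exactly this unweighted BSS-type selection with controlled cardinality and two-sided bounds; an MSS $r$-partition does not supply it.
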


Our proof of \Cref{thm:intro} is modeled on that of Nitzan, Olevskii, and Ulanovskii \cite[Theorem 1]{nitzan2013few} who showed a lower bound in \eqref{eq:int}. The question whether it is possible to obtain both lower and upper bounds in \eqref{eq:int} was posed as \cite[Section 6.2, Open problem 1]{nitzan2013few}. For proving the upper bound in \Cref{thm:intro}, we revisit a sparsification result for finite-dimensional frames due to Batson, Spielman, and Srivastava \cite{batson2014twice}. Their result  \cite[Theorem 3.1]{batson2014twice} allows to remove elements from an overcomplete finite-dimensional Parseval frame for $\mathbb{C}^n$ while leaving a weighted frame with cardinality arbitrary close to  $n$. We mention that the result of Batson, Spielman, and Srivastava was already used in \cite{nitzan2013few}. Using a result on scalable frames by the first named author \cite{bownik2024selector}, 
we obtain a version of the sparsification result \cite[Theorem 3.1]{batson2014twice} that additionally provides a lower bound on the nonzero weights; see \Cref{thm:bss2}. In turn, this allows us to remove elements from an overcomplete Parseval frame for $\mathbb{C}^n$ and obtain an \emph{unweighted} frame for $\mathbb{C}^n$ with explicit frame bounds and cardinality arbitrary close to $n$; see Lemma \ref{lem:batson}. The latter result is the key ingredient that allows us to obtain the upper bound in \eqref{eq:int} based on the methods used in \cite{nitzan2013few}. 
We mention that using weak limit techniques, Theorem \ref{thm:intro} can be shown to imply the existence of exponential frames on unbounded spectra with frame bounds satisfying \eqref{eq:int}, see  \cite{nitzan2016exponential}. However, weak limit techniques do not imply that a resulting frame is near the critical density. 

In addition, we provide an extension of \Cref{thm:intro} to exponential frames on locally compact abelian (LCA) groups. Such frames have been studied in various papers, see \cite{grochenig2008landau, agora2015multi, richard2020on, enstad2024dynamical, bownik2021multiplication, agora2019existence, antezana2022universal}. To be explicit, let $G$ be a second countable LCA group with Haar measure $\mu_G$ and the dual group $\widehat{G}$ with dual measure $\mu_{\widehat{G}}$. Given $t \in G$, we define the function $e_t \in \widehat{\widehat{G}}$ by $e_t (\chi) = \chi(t)$ for $\chi \in \widehat{G}$. 
Gr\"ochenig, Kutyniok, and Seip \cite{grochenig2008landau} extended Landau's result to LCA groups, see also \cite{richard2020on, enstad2024dynamical}. If $\Omega \subseteq \widehat{G}$ is a compact set and  $\{e_t \mathds{1}_{\Omega} \}_{t \in T}$ 
is a frame for $L^2 (\Omega)$, then
\[
D^- (T) := \liminf_{n \to \infty} \inf_{x \in G} \frac{\# (T \cap (x+K_n))}{\mu_G (K_n)} \geq \mu_{\widehat{G}} (\Omega), 
\]
where $(K_n)_{n \in \mathbb{N}}$ denotes a sequence of compact sets $K_n \subseteq G$ forming a \emph{strong F\o lner sequence}; see  Section \ref{sec:beurling} for more details on such sequences and associated Beurling densities. Similarly, if the system $\{e_t \mathds{1}_{\Omega} \}_{t \in T}$ 
is a Riesz sequence in $L^2 (\Omega)$, then
\[
D^+ (T) := \limsup_{n \to \infty} \sup_{x \in G} \frac{\# (T \cap (x+K_n))}{\mu_G (K_n)} \leq \mu_{\widehat{G}} (\Omega).
\]
In particular, if $\{e_t \mathds{1}_{\Omega} \}_{t \in T}$ is a Riesz basis for $L^2 (\Omega)$, then $T \subseteq G$ has \emph{uniform Beurling density} if $D(T) := D^- (T) = D^+ (T)$ is equal to the critical density $\mu_{\widehat{G}} (\Omega)$.

Agora, Antezana, and Cabrelli \cite{agora2015multi} showed that there exist exponential frames and Riesz sequences with density arbitrary close to the critical density. Although the papers \cite{grochenig2008landau, agora2015multi} use other notion of density defined by a reference lattice, their results can be rephrased in terms of Beurling density associated to F\o lner sequences as above.
 Our main result improves the existence of such exponential frames by providing frame bounds involving the spectrum. The precise statement is as follows. 

\begin{theorem} \label{thm:intro_LCA}
Let $G$ be a second countable LCA group with Haar measure $\mu_G$. Let $\widehat{G}$ be its dual group with dual measure $\mu_{\widehat{G}}$. 

Let $\varepsilon > 0$. For any compact set $\Omega \subseteq \widehat{G}$, there exists $T \subseteq G$ of uniform Beurling density $D(T) \leq (1+\varepsilon) \mu_{\widehat{G}} (\Omega)$ such that
\begin{align} \label{eq:inequalities2}
 A(\varepsilon) \mu_{\widehat G}(\Omega)  \| f \|^2 \leq \sum_{t \in T} |\langle f, e_t \rangle |^2 \leq B(\varepsilon) \mu_{\widehat G} (\Omega) \| f \|^2 \qquad \text{for all } f \in L^2 (\Omega; \mu_{\widehat G})
\end{align}
for some positive constants $A(\varepsilon), B(\varepsilon)$  depending only on $\varepsilon$. 
\end{theorem}

\Cref{thm:intro_LCA} is shown by reducing to special cases of LCA groups and lifting each of these versions to a more general class of groups. More precisely, we first prove a version of \Cref{thm:intro_LCA} for so-called \emph{elemental groups}, which are groups of the form $\mathbb{R}^d \times \mathbb{T}^n \times \mathbb{Z}^{\ell} \times F$, where $d, n$ and $\ell$ are nonnegative integers and $F$ is a finite abelian group. The proof of \Cref{thm:intro_LCA} for elemental groups is based on the proof of the real line. Second, we prove a version of \Cref{thm:intro_LCA} under the assumption that the dual group $\widehat{G}$ is compactly generated. For such a group $\widehat{G}$, one can pass to an elemental quotient group $\widehat{G} / K$, where $K$ is compact subgroup. The lifting of a frame from $\widehat{G}/K$ to a compactly generated group $\widehat{G}$ is inspired by the approach in \cite{agora2015multi} and hinges on the use of quasi-dyadic cubes, see \Cref{sec:quasidyadic}. Lastly, in order to prove \Cref{thm:intro_LCA} in full generality, we use a result from \cite{grochenig2008landau} that allows us to pass from the dual group $\widehat{G}$ to the (compactly generated) group generated by the spectrum $\Omega \subseteq \widehat{G}$. 

The organization of the paper is as follows.  Section \ref{sec:revisiting} is devoted to revisiting the sparsification result of Batson, Spielman and Srivastava for finite-dimensional frames.  A direct proof of \Cref{thm:intro} is given in Section \ref{sec:realline}.  The proof of \Cref{thm:intro_LCA} is carried out in Section \ref{sec:LCA}.

\subsection*{Notation} The set of natural numbers is denoted by $\mathbb{N} = \{ 1, 2, ...\}$ and we write $\mathbb{N}_0 = \mathbb{N} \cup \{0\}$. For a column vector $v \in \mathbb{C}^n$, we write $v^*$ for its conjugate transpose. The identity operator on $\mathbb{C}^n$ is denoted by $\mathbf{I}_n$. The indicator function of a set $\Omega$ is written as $\mathds{1}_{\Omega}$. 

\section{Revisiting Batson, Spielman, and Srivastava's theorem} \label{sec:revisiting}

We start with the sparsification theorem due to Batson, Spielman and Srivastava \cite[Theorem 3.1]{batson2014twice}, see also the exposition \cite[Theorem 2.1]{naor2012sparse}.

\begin{theorem}[\cite{batson2014twice}] \label{thm:bss}
 Let $\{v_i\}_{i = 1}^m$ be vectors in $\mathbb C^n$ satisfying
\begin{equation}\label{eq:bss1}
\sum_{i=1}^m v_i v_i^* = \mathbf{I}_n.
\end{equation}
For any $d>1$, there exist scalars $s_i \ge 0$, $i = 1, ..., m$, such that $\#\{ i : s_i \ne 0 \} \le \lceil dn \rceil$ and
\begin{align} \label{eq:bss2}
\bigg(1 - \frac{1}{\sqrt{d}} \bigg)^2 \mathbf{I}_n  \le \sum_{i=1}^m s_i v_i v_i^* \le
\bigg(1 + \frac{1}{\sqrt{d}} \bigg)^2 \mathbf{I}_n.
\end{align}
\end{theorem}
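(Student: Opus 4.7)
The plan is to use the barrier method introduced by Batson, Spielman, and Srivastava. Define the upper and lower barrier potentials
\begin{equation*}
\Phi^u(A,u) = \tr\bigl((uI - A)^{-1}\bigr), \qquad \Phi^\ell(A,\ell) = \tr\bigl((A - \ell I)^{-1}\bigr),
\end{equation*}
which blow up whenever an eigenvalue of a Hermitian matrix $A$ approaches $u$ from below or $\ell$ from above. These potentials serve as smooth proxies for the distance from the extreme eigenvalues of $A$ to the thresholds, and the strategy is to build the sparse subset iteratively while keeping both potentials bounded.

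Starting from $A_0 = 0$ together with initial thresholds $u_0 > 0$ and $\ell_0 < 0$, the plan is to construct a sequence $A_k = A_{k-1} + t_k v_{j_k} v_{j_k}^*$ of rank-one updates, and simultaneously to shift the barriers $u_k = u_{k-1} + \delta_u$, $\ell_k = \ell_{k-1} + \delta_\ell$, for carefully tuned constants $\delta_u, \delta_\ell > 0$ depending on $d$. The core of the proof is a one-step lemma stating that at each iteration one can find an index $j_k \in \{1,\ldots,m\}$ and a weight $t_k > 0$ such that $\ell_k I \prec A_k \prec u_k I$ and neither of the two potentials increases.

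The existence in the one-step lemma is proved by computing, via the Sherman--Morrison identity, how the resolvents $(uI - A)^{-1}$ and $(A - \ell I)^{-1}$ are perturbed by a rank-one update $A \mapsto A + t vv^*$; this produces an explicit inequality in $(j,t)$ that suffices to ensure the two potentials do not grow. One then averages this inequality over $j = 1, \ldots, m$ and uses the Parseval hypothesis \eqref{eq:bss1} to make the sums telescope into expressions controlled by $n$ and by $\delta_u, \delta_\ell$. Choosing the shifts so that this averaged quantity remains strictly feasible guarantees the existence of at least one admissible $(j_k, t_k)$ pair. After $k = \lceil dn \rceil$ iterations, the spectrum of $A_k$ lies in $[\ell_k, u_k]$ with ratio $u_k/\ell_k = \bigl((\sqrt d + 1)/(\sqrt d - 1)\bigr)^2$; setting $s_i$ to be the accumulated weight on index $i$ (zero if $i$ was never selected) and applying a global rescaling yields \eqref{eq:bss2} with bounds $(1 \pm 1/\sqrt d)^2$, while the sparsity $\#\{i : s_i \neq 0\} \leq \lceil dn \rceil$ is built into the construction.

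The main obstacle is the one-step existence lemma and the sharp tuning of $\delta_u$ and $\delta_\ell$. The rank-one update formulas for $\Phi^u$ and $\Phi^\ell$ via Sherman--Morrison are the technical heart of the argument, and the averaging argument is the precise point where the Parseval condition \eqref{eq:bss1} is used decisively; everything else is bookkeeping on the evolution of the two barriers. This is carried out in detail in \cite{batson2014twice}.
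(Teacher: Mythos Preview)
Your sketch correctly outlines the barrier-potential argument of Batson, Spielman, and Srivastava. However, the paper does not prove this theorem at all: it is stated as a quoted result from \cite{batson2014twice} (with the citation already in the theorem header), followed only by a remark that the argument in \cite{batson2014twice,naor2012sparse} extends from the real to the complex setting and that the ceiling in $\lceil dn \rceil$ can be handled by a monotonicity-and-rescaling trick. So there is nothing to compare against; your outline is essentially the proof from the cited reference, not something the present paper supplies.
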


\begin{remark}
 Strictly speaking, the proofs given in \cite{batson2014twice,naor2012sparse} consider only real vector spaces, but the arguments also work for complex spaces. In addition, the proofs in \cite{batson2014twice,naor2012sparse}  appear to yield an additional term in the upper bound \eqref{eq:bss2} in case $dn$ is not an integer.
However, using the decreasing function
\[
(1,\infty) \ni d\mapsto  \bigg( 1+ \frac1{\sqrt{d}}\bigg)^2   \bigg( 1- \frac1{\sqrt{d}}\bigg)^{-2},
\]
it follows that, if $dn \not \in \N$, then choosing $d'>d$ such that $ d'n =  \lceil dn \rceil$ and applying Theorem \ref{thm:bss} for $d'>1$, yields the conclusion of Theorem \ref{thm:bss} after a simple rescaling of coefficients $s_i$.
\end{remark}

The proof of Theorem \ref{thm:bss} given in \cite{batson2014twice, naor2012sparse} does not appear to provide any additional control over the scalars $s_i \geq 0$ appearing in Equation \eqref{eq:bss2}. It is our aim to provide such control by exploiting the following theorem, which is a finite variant of the result \cite[Theorem 7.1]{bownik2024selector} on scalable frames.

\begin{theorem}[\cite{bownik2024selector}] \label{thm:scal}
Let $I$ be a finite set and $\delta>0$. Suppose that $\{T_i\}_{i\in I}$ is a family of positive trace class operators in a separable Hilbert space $\mathcal{H}$ satisfying
\begin{equation*}\label{scal1}
\tr(T_i)\le \delta \qquad\text{for all }i \in I.
\end{equation*}
Let $\{a_i\}_{i\in I}$ be a sequence of positive numbers and define
\begin{equation*}\label{scal0}
T:= \sum_{i\in I} a_i T_i.
\end{equation*}
For any $0<\ve<1$, there exists a finite set $I'$ and a sampling function $\pi:I' \to I$ such that
\begin{equation*}\label{scal2}
\bigg\| \frac 1a \sum_{n\in I'} T_{\pi(n)} -  T \bigg\| < \ve,
\end{equation*}
for some constant $a > 0$ satisfying
$c_0 \frac{\delta}{\ve^2} \le a \le  2 c_0 \frac{\delta}{\ve^2}$, where $c_0>0$ is an absolute constant.
\end{theorem}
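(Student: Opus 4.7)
The statement is cast as the finite-index specialization of \cite[Theorem 7.1]{bownik2024selector}. I would first try to apply that theorem off the shelf by viewing $I$ as a $\sigma$-finite measure space with counting measure weighted by $\mu(\{i\}) = a_i$: then $\int_I T_i \, d\mu(i) = T$ and the hypothesis $\tr(T_i) \le \delta$ transfers verbatim, so the continuous selector theorem produces a discrete sampling with the claimed parameters. The finite hypothesis on $I$ merely simplifies the measure-theoretic bookkeeping without affecting the probabilistic core.

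If a self-contained argument is wanted, the plan is a random sampling argument controlled by an operator-valued Bernstein inequality. Set $A = \sum_{i \in I} a_i$, equip $I$ with the probability distribution $p_i = a_i/A$, pick a positive integer $N$, draw iid indices $\pi(1), \ldots, \pi(N)$ from this distribution, and set $I' = \{1,\ldots,N\}$ and $a := N/A$. Each $X_n := T_{\pi(n)}$ is a positive operator with $\|X_n\| \le \tr(X_n) \le \delta$ and $\mathbb{E}[X_n] = T/A$, so $(1/a)\sum_{n=1}^N X_n$ is an unbiased estimator of $T$.

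The key estimate is the variance bound: from $T_i^2 \le \|T_i\| T_i \le \delta T_i$ one gets $\mathbb{E}[X_n^2] \le \delta T/A$, whence
\[
\Bigl\| \sum_{n=1}^N \mathbb{E}\bigl[(X_n - \mathbb{E} X_n)^2\bigr] \Bigr\| \le a\delta \|T\|.
\]
A trace-class or intrinsic-dimension form of Bernstein's inequality (e.g., Tropp's theorem with effective dimension $\tr(V)/\|V\|$ in place of the ambient dimension) then yields $\|\sum_{n=1}^N X_n - aT\| < a\varepsilon$ with positive probability as soon as $a$ exceeds a fixed constant multiple of $\delta/\varepsilon^2$. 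Taking $N = \lceil c_0 A \delta/\varepsilon^2 \rceil$ for a suitable absolute $c_0$ places $a = N/A$ in the prescribed interval $[c_0\delta/\varepsilon^2, 2c_0\delta/\varepsilon^2]$ (the factor-of-two slack comfortably absorbs the integer rounding), and a deterministic realization of $\pi$ is extracted from the probabilistic statement in the standard way.

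The principal obstacle is the concentration step: a naive matrix Bernstein bound in finite dimensions contributes a $\log n$ penalty that would spoil the $\delta/\varepsilon^2$ scaling, and in a general separable Hilbert space there is no ambient dimension at all. Avoiding both issues requires a dimension-free formulation (intrinsic-dimension or outright trace-class Bernstein), which is precisely the technical ingredient encapsulated by the selector argument of \cite{bownik2024selector}, making the off-the-shelf application the cleanest route.
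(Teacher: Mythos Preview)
Your primary approach---apply \cite[Theorem 7.1]{bownik2024selector} directly and observe that the finite index set causes no trouble---is exactly what the paper does. The only cosmetic difference is in how the finite-versus-infinite mismatch is handled: the paper simply appends an infinite tail of zero operators to $\{T_i\}_{i\in I}$ (noting that the resulting sampling function can hit each nonzero $T_i$ only finitely often), whereas you frame $I$ as a weighted measure space. Your self-contained Bernstein-type sketch is extra material the paper does not pursue.
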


\begin{proof} The claim follows from \cite[Theorem 7.1]{bownik2024selector} by
appending an infinite collection of zero operators to the family $\{T_i\}_{i\in I}$. Indeed, the sampling function $\pi$ can only repeat the same value $i$ finitely many times when $T_i \ne 0$.
\end{proof}

Combining Theorems \ref{thm:bss} and  \ref{thm:scal}, we obtain an improvement of the theorem of Batson, Spielman, and Srivastava, which provides control over the weights in \eqref{eq:bss2}. The precise statement is as follows.

\begin{theorem}\label{thm:bss2}
 There exists an absolute constant $c_1 > 0$ with the following property:  If $\{v_i\}_{i = 1}^m$ is a family of nonzero vectors in $\mathbb{C}^n$ satisfying
 \[
 \sum_{i = 1}^m v_i v_i^* = \mathbf{I}_n,
 \]
  then for any $d>1$ there exist scalars $s_i \ge 0$, $i = 1, ..., m$, such that:
\begin{enumerate}
\item $\#\{ i : s_i \ne 0 \} \le \lceil dn \rceil$;
\item each $s_i$ is an integer multiple of $b$, where 
\[ c_1 \min_{i=1,...,m} ||v_i||^{-2} (1- 1/\sqrt{d})^4 \leq b \leq  2c_1  \min_{i = 1, ..., m} ||v_i||^{-2} (1- 1/\sqrt{d})^4; \]
\item 
\begin{align*}
\frac{1}{2} \bigg(1 - \frac{1}{\sqrt{d}} \bigg)^2 \mathbf{I}_n \le \sum_{i = 1}^m s_i v_i v_i^* \le 2 \bigg(1 + \frac{1}{\sqrt{d}} \bigg)^2 \mathbf{I}_n .
\end{align*}
\end{enumerate}
\end{theorem}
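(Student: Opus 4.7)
The plan is to combine Theorem \ref{thm:bss} (classical BSS sparsification, producing real-valued weights) with Theorem \ref{thm:scal} (which discretizes a positive operator weighted sum into integer multiples of a common base). The output of BSS gives us nonnegative scalars with no control on their size; the sampling theorem will replace those real weights by integer multiples of a small base, which is exactly what condition (ii) demands.

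First, I would apply Theorem \ref{thm:bss} to the given $\{v_i\}_{i=1}^m$ and the prescribed $d>1$ to produce scalars $t_i\ge 0$ with $J:=\{i:t_i\ne 0\}$ of cardinality at most $\lceil dn\rceil$ and
\[
(1-1/\sqrt d)^2\,\mathbf{I}_n \le \sum_{i\in J} t_i\, v_iv_i^* \le (1+1/\sqrt d)^2\,\mathbf{I}_n.
\]
Then I set $T_i := v_iv_i^*$ for $i\in J$, so that $\tr(T_i)=\|v_i\|^2\le \delta$ with $\delta:=\max_{i\in J}\|v_i\|^2$, and I put $T:=\sum_{i\in J} t_i T_i$.

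Next, I apply Theorem \ref{thm:scal} to this family with the choice
\[
\ve := \tfrac12 (1-1/\sqrt d)^2.
\]
It yields a finite index set $I'$, a sampling map $\pi:I'\to J$, and a constant $a>0$ with $c_0\delta/\ve^2 \le a \le 2c_0\delta/\ve^2$, such that $\bigl\|\tfrac1a\sum_{n\in I'} T_{\pi(n)} - T\bigr\|<\ve$. Setting $b:=1/a$ and $s_i := b\cdot\#\{n\in I':\pi(n)=i\}$ for $i\in J$ (and $s_i=0$ otherwise), each $s_i$ is by construction a nonnegative integer multiple of $b$, and
\[
\sum_{i=1}^m s_i v_iv_i^* \;=\; \frac1a\sum_{n\in I'} T_{\pi(n)},
\]
which is within $\ve$ in operator norm of $T$. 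Combined with the BSS bounds on $T$, this gives
\[
\bigl((1-1/\sqrt d)^2-\ve\bigr)\mathbf{I}_n \;\le\; \sum_i s_i v_iv_i^* \;\le\; \bigl((1+1/\sqrt d)^2+\ve\bigr)\mathbf{I}_n,
\]
and the choice of $\ve$ yields precisely the bounds in (iii): the lower bound becomes $\tfrac12(1-1/\sqrt d)^2$, and the upper bound is at most $2(1+1/\sqrt d)^2$ since $\ve\le(1+1/\sqrt d)^2$.

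It remains to verify (i) and (ii). For (i), the support of $(s_i)$ is contained in the support of $(t_i)$, which has size at most $\lceil dn\rceil$ by BSS. For (ii), plug $\ve^2=\tfrac14(1-1/\sqrt d)^4$ and $\delta = 1/\min_i\|v_i\|^{-2}$ into the bounds $\ve^2/(2c_0\delta)\le b \le \ve^2/(c_0\delta)$; setting $c_1:=1/(8c_0)$ produces exactly the two-sided estimate stated for $b$. The only subtle point — and really the only place where anything must be checked carefully — is that the sampling theorem is applied to a finite family, so no artificial restrictions on $\pi$ are needed, and that the choice $\ve=\tfrac12(1-1/\sqrt d)^2$ is simultaneously small enough to preserve the BSS bounds up to the factor $2$ and large enough to keep $b$ bounded below by the claimed quantity. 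No serious obstacle is anticipated: the argument is essentially a bookkeeping combination of the two cited results.
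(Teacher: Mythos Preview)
Your proof is correct and follows essentially the same route as the paper: apply BSS to get real weights supported on at most $\lceil dn\rceil$ indices, then feed the resulting weighted sum into Theorem~\ref{thm:scal} with $\ve=\tfrac12(1-1/\sqrt d)^2$ to discretize the weights into integer multiples of $b=1/a$, with $c_1=1/(8c_0)$. The only slip is your choice $\delta=\max_{i\in J}\|v_i\|^2$, which you later tacitly replace by $\max_{1\le i\le m}\|v_i\|^2$ when checking (ii); take $\delta=\max_{1\le i\le m}\|v_i\|^2$ from the start (any upper bound on the traces is admissible in Theorem~\ref{thm:scal}) so that the two-sided estimate on $b$ comes out exactly as stated---this is precisely what the paper does.
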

\begin{proof}
Let $I = \{ 1, ..., m \}$. By Theorem \ref{thm:bss}, there exists a family $\{ s'_i \}_{i \in I}$ of scalars $s'_i \geq 0$ such that $\# \{ i \in I : s'_i \neq 0 \} \leq \lceil dn \rceil$ and
\[
 \bigg(1 - \frac{1}{\sqrt{d}} \bigg)^2  \mathbf{I}_n \leq \sum_{i = 1}^m s'_i v_i v_i^* \leq \bigg(1 + \frac{1}{\sqrt{d}} \bigg)^2  \mathbf{I}_n.
\]
An application of Theorem \ref{thm:scal} with $a_i = s'_i$, $\delta = \max_{i \in I} \| v_i \|^2$, $T_i = v_i v_i^*$ and $\ve = \frac{1}{2} (1- 1/\sqrt{d})^2$ yields a finite set $I'$ and a sampling function $\pi : I' \to I$ such that
\[
\bigg\| \frac{1}{a} \sum_{n \in I'} T_{\pi(n)} - \sum_{i \in I} s_i' T_i \bigg\| < \frac{1}{2} \bigg(1 - \frac{1}{\sqrt{d}} \bigg)^2
\]
for a constant $a > 0$ satisfying
\[
4 c_0  \max_{i \in I} \| v_i \|^2 / (1- 1/\sqrt{d})^4 \leq a \leq 8 c_0  \max_{i \in I} \| v_i \|^2 / (1- 1/\sqrt{d})^4,
\]  
where $c_0 > 0$ is the absolute constant from Theorem \ref{thm:scal}. 
Therefore,
\[
 \frac{1}{2} \bigg(1 - \frac{1}{\sqrt{d}} \bigg)^2 \mathbf{I}_n \leq \frac{1}{a} \sum_{n \in I'} v_{\pi(n)} v^*_{\pi(n)} \leq 2 \bigg(1 + \frac{1}{\sqrt{d}} \bigg)^2 \mathbf{I_n}.
\]
Since $I'$ is a finite set, there exist integers $l_i \in \mathbb{N}_0$ such that
\[
 \sum_{n \in I'} v_{\pi(n)} v^*_{\pi(n)} = \sum_{i \in I} l_i v_i v_i^*.
\]
Setting $s_i := l_i / a$ for $i \in I$ yields the conclusions (ii) and (iii) with $b=1/a$ and $c_1=1/(8c_0)$.
\end{proof}

There are two results in the literature that are closely related to \Cref{thm:bss2}. First, with notation as in \Cref{thm:bss2}, Balan, Casazza and Landau \cite[Lemma 3.2]{balan2011redundancy} proved a sparsification result for finite-dimensional frames such that the cardinality $\# \{ i : s_i \neq 0 \}$ can be chosen arbitrary close to $n$ and such that $s_i = 1$ whenever $s_i \neq 0$. However, the frame bounds in \cite[Lemma 3.2]{balan2011redundancy} depend in an implicit manner on $d$ and $m/n$. Second,
a sparsification theorem with equal weights and absolute frame bounds has been proven by Friedland and Youssef \cite{friedland2019approximating}. More precisely, their result \cite[Corollary 3.1]{friedland2019approximating} provides a sparsification with control over the weights $s_i$ in case $v_i$ have equal norms. However, in contrast to Theorem \ref{thm:bss2}, it appears that \cite[Corollary 3.1]{friedland2019approximating} does not allow the cardinality of $ \{ i : s_i \neq 0 \}$ to be arbitrary close to the ambient dimension $n$. The significance of Theorem \ref{thm:bss2} for the present paper is that it simultaneously allows the cardinality $\# \{ i : s_i \neq 0 \}$ to be arbitrarily close to $n$, provides estimates for the nonzero weights, and yields  frame bounds only depending on $d$.

Using  Theorem \ref{thm:bss2}, we prove the following result, which is the key ingredient that we will actually use in the present paper. Lemma \ref{lem:batson} provides the upper bound missing in \cite[Corollary 1]{nitzan2013few}.

\begin{lemma} \label{lem:batson}
There exist absolute constants $c, C>0$ with the following property: If $\{v_i\}_{i = 1}^m$ is a family of vectors $v_i \in \mathbb{C}^n$ with norms $\| v_i \|^2 = n/m$ satisfying
 \begin{align} \label{eq:parseval}
 \sum_{i = 1}^m |\langle v, v_i \rangle |^2 = \| v \|^2 \qquad \text{for all }  v \in \mathbb{C}^n,
 \end{align}
 then for every $\varepsilon > 0$, there exists a subset $J \subseteq \{ 1, ..., m\}$ of cardinality $\#J \leq \lceil (1+\varepsilon) n \rceil$ such that
\begin{align} \label{eq:batson}
  c (1-1/\sqrt{1+\ve})^2 \frac{n}{m} \| v \|^2 \leq \sum_{i \in J} |\langle v, v_i \rangle |^2 \leq C \frac{1}{(1-1/\sqrt{1+\ve})^4} \frac{n}{m} \| v \|^2
\end{align}
for all $v \in \mathbb{C}^n$. 
\end{lemma}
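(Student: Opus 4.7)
The plan is to apply Theorem \ref{thm:bss2} with $d = 1+\varepsilon$ and then convert the resulting weighted spectral approximation into an unweighted one by exploiting the quantization of the weights provided by conclusion (ii). Since $\|v_i\|^2 = n/m$ for every $i$, the quantity $\min_i \|v_i\|^{-2}$ equals $m/n$, and the theorem produces nonnegative scalars $s_i = l_i b$ with $l_i \in \mathbb{N}_0$, of which at most $\lceil (1+\varepsilon)n\rceil$ are nonzero, together with
\[
c_1 (m/n)(1-1/\sqrt{1+\varepsilon})^4 \;\le\; b \;\le\; 2 c_1 (m/n)(1-1/\sqrt{1+\varepsilon})^4,
\]
and $\tfrac{1}{2}(1-1/\sqrt{1+\varepsilon})^2 \mathbf{I}_n \le \sum_{i=1}^m s_i v_i v_i^* \le 2(1+1/\sqrt{1+\varepsilon})^2 \mathbf{I}_n$.

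The natural choice is $J := \{i : s_i \ne 0\} = \{i : l_i \ge 1\}$, which automatically has cardinality at most $\lceil (1+\varepsilon)n\rceil$. For the upper bound in \eqref{eq:batson}, the inequality $s_i \ge b$ on $J$ yields $\sum_{i \in J} v_i v_i^* \le b^{-1} \sum_i s_i v_i v_i^*$, and inserting the spectral upper bound together with the lower bound on $b$, combined with the elementary estimate $(1+1/\sqrt{1+\varepsilon})^2 \le 4$ for $d > 1$, produces a majorant of the form $C (1-1/\sqrt{1+\varepsilon})^{-4}(n/m)\mathbf{I}_n$ for an absolute $C$.

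For the lower bound, I first need a ceiling $L$ for the integers $l_i$. Taking operator norms in the inequality $s_i v_i v_i^* \le \sum_j s_j v_j v_j^*$ and using $\|v_i\|^2 = n/m$ gives $l_i b (n/m) \le 2(1+1/\sqrt{1+\varepsilon})^2$, so $L := \max_i l_i$ is at most an absolute constant multiple of $(1-1/\sqrt{1+\varepsilon})^{-4}$. The key observation is that this $\varepsilon$-dependence in $L$ exactly cancels the $\varepsilon$-dependence in the upper bound on $b$, so that the product $Lb$ is bounded by an absolute constant multiple of $m/n$. Since $s_i \le Lb$ on $J$, we obtain $\sum_{i \in J} v_i v_i^* \ge (Lb)^{-1} \sum_i s_i v_i v_i^* \ge c(1-1/\sqrt{1+\varepsilon})^2 (n/m) \mathbf{I}_n$ for an absolute $c$. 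The only genuinely delicate point is this cancellation in $Lb$; once it is recognized, the rest is routine bookkeeping using Theorem \ref{thm:bss2}.
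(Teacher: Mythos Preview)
Your argument is correct and follows essentially the same route as the paper: apply Theorem~\ref{thm:bss2} with $d=1+\varepsilon$, take $J=\{i:s_i\ne 0\}$, and use the lower bound $s_i\ge b$ for the upper frame estimate and an upper bound on the $s_i$ for the lower frame estimate. The only difference is in how the upper bound on $s_i$ is obtained for the lower frame inequality: you bound $L=\max_i l_i$ via $l_i b(n/m)\le 2(1+1/\sqrt{1+\varepsilon})^2$ and then observe a cancellation in $Lb$, whereas the paper bypasses the quantization entirely at this step by plugging $v=v_j$ into the upper inequality in (iii) to get $s_j\|v_j\|^4\le 2(1+1/\sqrt{1+\varepsilon})^2\|v_j\|^2$, i.e.\ $s_j\le 8\,m/n$ directly. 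This yields the same bound with no cancellation bookkeeping, so you could streamline your lower-bound paragraph accordingly.
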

\begin{proof}
Note that the assumption \eqref{eq:parseval} implies that $\sum_{i = 1}^m v_i v_i^* = \mathbf{I}_n$. 
Hence, an application of Theorem \ref{thm:bss2} with $d := 1+\varepsilon$ yields a family $\{s_i \}_{i \in J}$ of scalars $s_i > 0$ with $\# J \leq \lceil (1+\varepsilon) n\rceil$
such that
\begin{align} \label{eq:batson2}
 \frac{1}{2} \bigg(1 - \frac{1}{\sqrt{1+\ve}} \bigg)^2 \| v \|^2 \leq \sum_{i \in J} s_i |\langle v, v_i \rangle |^2 \leq 2 \bigg(1 + \frac{1}{\sqrt{1+\ve}} \bigg)^2\| v \|^2 
\end{align}
for all $v \in \mathbb{C}^n$. 
To simplify notation, we set $c(\ve) = (1-1/\sqrt{1+\ve})^2$ and $C(\ve) = (1+1/\sqrt{1+\ve})^2$.  Theorem \ref{thm:bss2} shows that each $s_i$ is of the form $s_i = l_i b$ for some $l_i \in \mathbb{N}$ and $c_1 c(\ve)^2 \frac{m}{n} \leq b \leq 2 c_1 c(\ve)^2 \frac{m}{n}$, with $c_1 > 0$ being the absolute constant of Theorem \ref{thm:bss2}.
 Therefore, $s_i \geq c_1 c(\ve)^2 \frac{m}{n}$, which implies that
\[
\sum_{i \in J} |\langle v, v_i \rangle |^2 \leq \frac{2 C(\varepsilon)}{c_1 c(\ve)^2} \frac{n}{m} \| v \|^2 \leq \frac{8}{c_1 c(\ve)^2} \frac{n}{m} \| v \|^2 \quad \text{for all} \quad v \in \mathbb{C}^n.
\]
For proving the lower frame bound, we use the upper estimate in \eqref{eq:batson2} to obtain, for $j \in J$,
\begin{align*}
s_j \| v_j \|^4  \leq \sum_{i \in J} s_i |\langle v_j, v_i \rangle |^2 \leq 2C(\ve) \| v_j \|^2.
\end{align*}
This implies $s_j \leq 2C(\ve) m / n$, and thus
\[
  \sum_{i \in J} |\langle v, v_i \rangle |^2 \geq \frac{c(\varepsilon)}{4  C(\varepsilon)} \frac{n}{m} \| v \|^2 \geq \frac{c(\varepsilon)}{16 } \frac{n}{m} \| v \|^2 \quad \text{for all} \quad v \in \mathbb{C}^n,
\]
as claimed.
\end{proof}

As in \cite{nitzan2013few, nitzan2016exponential}, we reformulate \Cref{lem:batson} in a form that is more convenient for applications to exponential frames. For an $m \times n$-matrix $M$ and a subset $J \subseteq \{1, ..., m\}$, the $\#J \times n$ submatrix of $M$ whose rows belong to $J$ is denoted by $M(J)$.

\begin{lemma} \label{lem:submatrix}
There exist absolute constants $c, C>0$ with the following property: If
 $M$ is an $m \times n$-matrix that is a submatrix of some orthonormal $m \times m$-matrix and such that all rows of $M$ have equal $2$-norm, then, 
 for every $\varepsilon > 0$, there exists a subset $J \subseteq \{1, ..., m\}$ of cardinality $\#J \leq \lceil (1+\varepsilon) n \rceil$ satisfying
\[
 c \big(1-1/\sqrt{1+\ve} \big)^2  \frac{n}{m} \| v \|^2 \leq \big\| M(J) v \big\|_{\ell^2(J)} \leq C \frac{1}{(1-1/\sqrt{1+\ve})^4} \frac{n}{m} \| v \|^2 
\]
for all $v \in \mathbb{C}^n$.
\end{lemma}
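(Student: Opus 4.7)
The plan is to recognize \Cref{lem:submatrix} as a direct translation of \Cref{lem:batson} into the language of row-selection from a matrix. Write the rows of $M$ as column vectors $v_1,\dots,v_m \in \mathbb{C}^n$, so that the $i$-th row of $M$ equals $v_i^*$ and $(M(J) v)_i = \langle v, v_i\rangle$ for $i \in J$. The hypothesis that $M$ is a submatrix of an $m \times m$ orthonormal matrix means that the $n$ columns of $M$ are orthonormal in $\mathbb{C}^m$, i.e.\ $M^* M = \mathbf{I}_n$. Expanding this identity row-by-row gives
\[
\sum_{i=1}^m v_i v_i^* = M^* M = \mathbf{I}_n,
\]
which is precisely the Parseval condition \eqref{eq:parseval}.

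Next I would compute $\sum_{i=1}^m \| v_i \|^2 = \tr(M^* M) = n$, so the equal-norm hypothesis on the rows forces $\| v_i \|^2 = n/m$ for every $i \in \{1,\dots,m\}$. At this point all hypotheses of \Cref{lem:batson} are verified, and applying it with the given $\varepsilon > 0$ produces a subset $J \subseteq \{1,\dots,m\}$ of cardinality $\# J \le \lceil (1+\varepsilon) n \rceil$ such that
\[
 c\,\bigl(1-1/\sqrt{1+\ve}\bigr)^2 \frac{n}{m} \| v \|^2 \le \sum_{i \in J} |\langle v, v_i\rangle|^2 \le C\, \frac{1}{(1-1/\sqrt{1+\ve})^4} \frac{n}{m} \| v \|^2
\]
holds for every $v \in \mathbb{C}^n$, with the same absolute constants $c, C > 0$ as in \Cref{lem:batson}.

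To conclude, I would note that the middle quantity equals $\| M(J) v \|_{\ell^2(J)}^2$, since the entries of $M(J) v$ are precisely the inner products $\langle v, v_i\rangle$ for $i \in J$. Substituting this identification into the two-sided estimate yields the conclusion of \Cref{lem:submatrix}. There is no real obstacle: the only points to keep track of are the conversion between the row viewpoint and the frame viewpoint, and the observation that the norm $\| M(J) v\|_{\ell^2(J)}$ written in the statement is to be read as its square, as is already forced by homogeneity in $v$.
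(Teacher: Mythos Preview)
Your proposal is correct and follows essentially the same approach as the paper: identify the rows of $M$ with vectors, verify that the submatrix hypothesis gives the Parseval condition and the equal-norm hypothesis gives $\|v_i\|^2 = n/m$, and then invoke \Cref{lem:batson}. The only cosmetic difference is that you set things up so the $i$-th row equals $v_i^*$, which makes $(M(J)v)_i = \langle v, v_i\rangle$ directly, whereas the paper takes $v_i$ to be the row itself and therefore passes through the complex conjugates $\{\overline{v_i}\}$ at the end; your convention is slightly cleaner but the argument is the same.
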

\begin{proof}
Let $\{v_i\}^m_{i = 1}$ be the rows of the $m \times n$-matrix $M$. Since $M$ is a submatrix of an orthonormal matrix, the rows $\{ v_i \}_{i = 1}^m$ form a Parseval frame \eqref{eq:parseval} for $\mathbb{C}^n$. In addition, the assumption that the vectors $\{v_i\}_{i = 1}^m$ have all equal $2$-norm yields that $\| v_i \|^2 = n/m$. Therefore, \Cref{lem:batson} is applicable and yields, given $\ve > 0$, a frame $\{ v_i \}_{i \in J}$ with index set satisfying $\#J \leq \lceil (1+ \ve) n \rceil$ and frame bounds as in \eqref{eq:batson}. Since the submatrix $M(J)$ is the analysis operator of the complex conjugates $\{\overline{v_i}\}_{i \in J}$ of $\{v_i\}_{i \in J}$, which forms a frame with the same frame bounds, the claim follows.
\end{proof}

\section{Exponential frames on the real line} \label{sec:realline}
In this section, we give a direct proof of Theorem \ref{thm:intro}. The proof is modeled on that of \cite[Theorem 1]{nitzan2013few}, see also \cite[Lemma 7]{nitzan2016exponential}. We will use the notation $e_x (t) = e^{2\pi i x t}$ for $x, t \in \mathbb{R}$ and denote by $|\Omega|$ the Lebesgue measure of a set $\Omega \subseteq \mathbb{R}$. 

The following theorem is a version of Theorem \ref{thm:intro} with more explicit frame bounds. 

\begin{theorem}
There exist absolute constants $c, c'> 0$ with the following property:

Given $\varepsilon > 0$ and a compact set $\Omega \subseteq \R$, which is contained in an interval of length $d>0$, there exists $\Lambda \subseteq d^{-1} \mathbb{Z}$ of uniform density $D(\Lambda) \leq (1+\varepsilon) |\Omega|$ such that
\begin{align} \label{eq:int}
 A(\ve) |\Omega| \| f \|^2  \leq \sum_{\lambda \in \Lambda} |\langle f, e_{\lambda} \rangle |^2 \leq B(\ve) |\Omega| \| f \|^2 
\end{align}
for all $f \in L^2 (\Omega)$, where the constants $A(\ve)$ and $B(\ve)$ are given by
\begin{align} \label{eq:frameconstants0}
A(\varepsilon) = c  \big(1 - 1/ \sqrt{1+\varepsilon/4} \big)^2
\quad \text{and} \quad 
B(\varepsilon) = c'  \big(1 - 1/\sqrt{1+\varepsilon/4}\big)^{-4}.
\end{align}
\end{theorem}

\begin{proof}
First, note that if $\{ e_{\lambda} \mathds{1}_{\Omega} \}_{\lambda \in \Lambda}$ is a frame for $L^2 (\Omega)$, then the system $\{ e_{\lambda} \mathds{1}_{\Omega + x} \}_{\lambda \in \Lambda}$  is a frame for $L^2 (\Omega + x)$ with the same frame bounds for any $x \in \mathbb{R}$. As such, it suffices to assume that $\Omega \subseteq [0, d]$ for some $d>0$. Furthermore,  since rescaling a frame $\{ e_{\lambda'} \mathds{1}_{\Omega'} \}_{\lambda' \in \Lambda'}$ for $L^2 (\Omega')$ with $\Lambda' \subseteq \mathbb{Z}$, $\Omega' \subseteq [0,1]$ and frame bounds $A|\Omega'|, B|\Omega'| $ yields a frame $\{ e_{\lambda} \mathds{1}_{d \Omega'} \}_{\lambda \in d^{-1} \Lambda'}$ for $L^2 (d\Omega')$ with $d^{-1} \Lambda' \subseteq d^{-1} \mathbb{Z}$, $d \Omega' \subseteq [0, d]$ and frame bounds $A|d\Omega'|, B|d\Omega'|$, it may be assumed that $\Omega \subseteq [0, 1]$. 

Second, note that every compact set $\Omega \subseteq [0, 1]$ can be covered by a set of the form
\begin{align} \label{eq:special_set}
 \bigcup_{k \in I} \bigg[ \frac{k}{m}, \frac{k+1}{m} \bigg], \quad I \subseteq \{0, ..., m - 1\}, \; \#I = n, 
\end{align}
where $m, n \in \mathbb{N}$ with $m > n$, and that  the measure of such set can be chosen arbitrary close to that of $\Omega$. We choose $\delta > 0$ such that
\begin{align} \label{eq:assumption1}
(1 + \ve/2) (|\Omega|+\delta) \leq (1+\ve) |\Omega|
\end{align}
and select a set $\Omega' \supseteq \Omega$ of the form \eqref{eq:special_set} such that 
\begin{align} \label{eq:assumption2}
|\Omega| \leq |\Omega'| \leq |\Omega| + \delta.
\end{align}
Note that $|\Omega'| = \frac{n}{m}$. Moreover, by choosing $m \in \mathbb{N}$ sufficiently large, we can choose $n \in \mathbb{N}$ such that $\lceil (1+\ve/4) n\rceil \leq (1+\ve/2) n$.

Denote by $F$ the $m \times m$-matrix with entries $F_{j,k} = e^{i 2\pi j k / m}$ for $j,k = 0, ..., m-1$, and recall that $(\sqrt{m})^{-1} F$ is orthonormal. Let $F_I $ denote the $m \times n$-submatrix of $F$ whose columns are indexed by $I$. Then an application of \Cref{lem:submatrix} to $(\sqrt{m})^{-1} F_I$ yields a subset $J \subseteq \{0, ..., m-1\}$ of cardinality $\#J \leq \lceil(1+\varepsilon/4) n\rceil$ such that
\begin{align} \label{eq:fouriersubmatrix}
 c \cdot \big(1-1/\sqrt{1+\varepsilon/4}\big)^2 n \| v \|^2 \leq \big\| F_I (J) v \big\|^2_{\ell^2(J)} \leq C \cdot \frac{1}{\big(1-1/\sqrt{1+\varepsilon/4}\big)^4} n \| v \|^2 
 \end{align}
for all $v \in \mathbb{C}^n$, where $c, C > 0$ are the absolute constants provided by \Cref{lem:submatrix}. To simplify notation, we set $c(\varepsilon) = c\cdot (1-1/\sqrt{1+\varepsilon})^2$ and $C(\varepsilon) = C \cdot (1-1/\sqrt{1+\varepsilon})^{-4}$.

Define the set $\Lambda := \bigcup_{j \in J} (j + m \mathbb{Z})$. Then
\begin{align} \label{eq:densityR}
 D(\Lambda)  = \#J \cdot D(m \mathbb{Z}) \leq   \frac{\lceil (1+\varepsilon/4) n\rceil}{m} \leq (1+\varepsilon/2) |\Omega'|.
\end{align}
We first show that $\{e_{\lambda} \mathds{1}_{\Omega'} \}_{\lambda \in \Lambda}$ is a frame for $L^2 (\Omega')$.

Let $f \in L^2 (\Omega')$ be arbitrary. Since $L^2 (\Omega') = \bigoplus_{k \in I} L^2 (\frac{k}{m} + [0,1/m])$, we can identify 
$f$ with a sequence $(f_{k/m})_{k \in I} \in L^2 (k/m + [0,1/m])$, and write
\begin{align*}
\sum_{\lambda \in \Lambda} | \langle f, e_{\lambda} \rangle |^2 
&= \sum_{j \in J} \sum_{l \in \mathbb{Z}} \bigg| \sum_{k \in I} \langle f_{k/m} , e_{j + ml} \rangle_{L^2 (k/m + [0,1/m])} \bigg|^2 \\
&= \sum_{j \in J} \sum_{l \in \mathbb{Z}} \bigg| \sum_{k \in I} \langle f_{k/m} (\cdot - k/m)  , e_{j + ml} (\cdot - k/m) \rangle_{L^2 (0,1/m)} \bigg|^2 \\
&= \sum_{j \in J} \sum_{l \in \mathbb{Z}} \bigg| \sum_{k \in I} \langle e_{j} (k/m) e_{-j} (\cdot) f_{k/m} (\cdot - k/m)  , e_{ml} \rangle_{L^2 (0,1/m)} \bigg|^2,
\end{align*}
where the last equality used that  $e_{ml} (k/m) = 1$ for $k, l \in \mathbb{Z}$. A combination of these equalities and the fact that $\{ e_{ml} \mathds{1}_{(0,1/m)} \}_{l \in \mathbb{Z}}$ is an orthogonal basis for $L^2 (0,1/m)$, yields 
\begin{align*}
\sum_{\lambda \in \Lambda} | \langle f, e_{\lambda} \rangle |^2 
&= \frac{1}{m} \sum_{j \in J} \bigg\| \sum_{k \in I} e_{j} (k/m) e_{-j} (\cdot)  f_{k/m} (\cdot - k/m) \bigg\|^2_{L^2 (0,1/m)} \\
&= \frac{1}{m} \sum_{j \in J} \bigg\| \sum_{k \in I} e_{j} (k/m)  f_{k/m} (\cdot - k/m) \bigg\|^2_{L^2 (0,1/m)} \\
&= \frac{1}{m} \int_{0}^{1/m} \big\| F_I (J) \big(f_{k/m} (x - k/m) \big)_{k \in I} \big\|^2_{\ell^2 (J)} \; dx.
\end{align*}
Applying \eqref{eq:fouriersubmatrix} therefore yields
\begin{align*}
\sum_{\lambda \in \Lambda} | \langle f, e_{\lambda} \rangle |^2 
&\leq C(\ve/4) \frac{n}{m} \int_{0}^{1/m} \big\| \big(f_{k/m} (x - k/m) \big)_{k\in I} \big\|^2_{\ell^2 (I)} \; dx \\
&= C(\ve/4) |\Omega'| \sum_{k \in I} \int_{0}^{1/m} | f_{k/m} (x - k/m)|^2 \; dx \\
&= C(\ve/4) |\Omega'| \| f \|^2_{L^2 (\Omega')}.
\end{align*}
 Similar arguments yield the lower bound
\[
\sum_{\lambda \in \Lambda}| \langle f, e_{\lambda} \rangle |^2 \geq c(\varepsilon/4) |\Omega'|  \| f \|^2_{L^2(\Omega')}.
\]
Hence, $\{ e_{\lambda} \mathds{1}_{\Omega'} \}_{\lambda \in \Lambda}$ is a frame for $L^2 (\Omega')$ with frame bounds $c(\ve/4) |\Omega'|$ and $C(\ve/4) |\Omega'|$. 

Lastly, note that $\Omega \subseteq \Omega'$, combined with \eqref{eq:assumption1} and \eqref{eq:assumption2}, yields that
\[
c(\varepsilon/4) |\Omega| \| f \|^2_{L^2 (\Omega)} \leq \sum_{\lambda \in \Lambda} | \langle f, e_{\lambda} \rangle |^2 \leq C(\ve/4) |\Omega'| \| f \|^2_{L^2 (\Omega)} 
\leq C(\ve/4) \frac{1+\ve}{1+\ve/2} |\Omega| \| f \|^2_{L^2 (\Omega)}
\]
for all $f \in L^2 (\Omega)$. Since $(1+\ve)/(1+\ve/2) \leq 2$, this shows the frame inequalities \eqref{eq:int} for the absolute constants $c$ and $c' = 2C$.  In addition, a combination of \eqref{eq:assumption1},  \eqref{eq:assumption2} and \eqref{eq:densityR}  shows that
\[
D(\Lambda) \leq (1+\ve/2) |\Omega'| \leq (1+\ve) |\Omega|.
\]
This completes the proof. 
\end{proof}

Theorem \ref{thm:intro} can be rephrased in terms of sampling in Paley-Wiener spaces. For a compact set $\Omega \subseteq \mathbb{R}$, the Paley-Wiener space $\PW_{\Omega}$ consists of all functions $f \in L^2 (\mathbb{R})$ whose Fourier transform
\[
\widehat{f} (\xi) = \int_{\mathbb{R}} f(t) e^{- 2\pi i \xi t} dt, \quad \xi \in \mathbb{R},
\]
has its essential support inside of $\Omega$. Every function in $\PW_{\Omega}$ is continuous, and the Fourier inversion formula gives
\[
f(x) = \int_{\Omega} \widehat{f}(\xi) e^{2\pi i x \xi} \; d\xi = \langle \widehat{f}, e_{x} \rangle_{L^2(\Omega)}, \quad x \in \mathbb{R}.
\]
The following result is therefore a direct consequence of Theorem \ref{thm:intro}.

\begin{corollary} \label{cor:paley}
Given $\varepsilon > 0$ and a compact set $\Omega \subseteq \R$, which is contained in an interval of length $d>0$, there exists $\Lambda \subseteq d^{-1} \mathbb{Z}$ of uniform density $D(\Lambda) \leq (1+\varepsilon) |\Omega|$ such that
\begin{align*} \label{eq:int}
A(\ve) |\Omega| \| f \|_{L^2 (\mathbb{R})}^2 \leq \sum_{\lambda \in \Lambda} |f(\lambda)|^2 \leq B(\ve) |\Omega| \| f \|_{L^2 (\mathbb{R})}^2 
\end{align*}
for all $ \in \PW_{\Omega}$, where the constants $A(\ve)$ and $B(\ve)$ are as in \eqref{eq:frameconstants0}. 
\end{corollary}

\section{Exponential frames on locally compact abelian groups} \label{sec:LCA}
This section is devoted to proving our main result on exponential frames on locally compact abelian groups. Standard references for background on harmonic analysis on locally compact abelian groups are, e.g., \cite{hewitt1963abstract, reiter2000classical, folland2016course}.

\subsection{Necessary density conditions} \label{sec:necdensityLCA}
Let $G$ be a second countable locally compact abelian (LCA) group with dual group $\widehat{G}$. To avoid trivialities we assume that $G$ is noncompact.
We denote by $\mu_G$ a Haar measure on $G$ and always assume that $\mu_{\widehat{G}}$ is the dual measure on $\widehat{G}$ relative to $\mu_G$, i.e., the Haar measure such that Plancherel's formula holds.

A subset $T \subseteq G$ is said to be \emph{uniformly discrete} if there exists an open set $U \subseteq G$ such that the sets $x+U$, $x\in T$, are pairwise disjoint. In particular, any uniform lattice in $G$, i.e., a discrete cocompact subgroup, is uniformly discrete.

Following \cite{grochenig2008landau}, we define a relation on the set of uniformly discrete sets in the following manner.

\begin{definition}
Given two uniformly discrete sets $T$ and $T'$ and nonnegative numbers $\alpha$ and $\alpha'$, we write $\alpha T \preccurlyeq \alpha' T'$ if for every $\ve>0$ there exists a compact set $K \subseteq G$ such that
\[
 (1-\varepsilon) \alpha \#( T \cap L) \le \alpha' \#(T' \cap (K+L))
\]
for all compact sets $L \subseteq G$.
\end{definition}

Using the relation just defined, we next define the notion of uniform densities.

\begin{definition}\label{ud}
Let $H$ be a fixed uniform lattice in $G$. Let $T$ be another uniformly discrete subset of $G$. The {\it lower uniform density} of $T$ with respect to $H$ is defined as
\[
 D^-_{H} (T) := \sup\{ \alpha \in [0, \infty) : \alpha H \preccurlyeq T\}.
\]
The {\it upper uniform density} of $T$ with respect to $H$ is defined as
\[
 D^+_{H} (T) := \inf\{ \alpha \in [0, \infty) : T \preccurlyeq  \alpha H\}.
\]
If  $D^-_{H} (T) = D^+_{H} (T)$, then $T$ is said to have {\it uniform density} $D_H(T)$ with respect to $H$.
\end{definition}

Alternatively, uniform densities can be phrased in terms of the comparison of measures.
Following \cite{grochenig2008landau}, given two locally finite positive measures $\nu$ and $\mu$ on $G$, we write $\nu \preceq \mu$ if for every $\varepsilon > 0$ there exists a compact set $K \subseteq G$ such that
\[
 (1-\varepsilon) \nu (L) \leq \mu(K+L)
\]
for all compact sets $L \subseteq G$. If we choose $\nu$ to be the counting measure $\delta_{T}$ of $T$ and $\mu$ to be the counting measure $\delta_H$ of $H$, or vice versa, then the notion of uniform densities can be restated as
\[
 D^-_{H} (T) = \sup \{ \alpha \in [0, \infty) : \alpha \delta_{H} \preceq \delta_{T} \},
 \qquad
 D^+_{H} (T) := \inf\{ \alpha \in [0, \infty) :  \delta_T \preceq  \alpha \delta_{H} \} .
\]
Furthermore, if the measure $\mu_G$ is normalized such that the fundamental domain of $G/H$ has measure $1$, then we have $\mu_G \preceq \delta_H$ and $\delta_H \preceq \mu_G$. Thus, we have yet another formulation of uniform density with respect to $H$,
\[
 D^-_{H} (T) = \sup \{ \alpha \in [0, \infty) : \alpha \mu_G \preceq \delta_{T} \},
 \qquad
 D^+_{H} (T) := \inf\{ \alpha \in [0, \infty) :  \delta_T \preceq  \alpha \mu_G \} .
\]

Suppose  that the dual group $\widehat{G}$ is compactly generated. By the structure theory of such groups, the group $\widehat G$ is isomorphic to $\R^d \times \Z^n \times K_0$ for some compact group $K_0$, see, e.g., \cite[Theorem 9.8]{hewitt1963abstract}. Consequently, $G$ is of the form $\R^d \times \T^n \times D_0$ for some countable discrete group $D_0$. We will always select as a uniform lattice $H=\Z^d \times \{e\} \times D_0$ as {\bf the reference lattice} used for computing density, where $e$ is the identity element of $\T^n$. Then the annihilator $H^{\perp} := \{ \chi \in \widehat{G} : \chi(x) = 1, \; \forall x \in H \}$ is a uniform lattice in $\widehat{G}$, and we denote by $\Sigma_{H^{\perp}}$ a fundamental domain of $H^{\perp} \subseteq \widehat{G}$. If the characters on $\R^d$ are given by $x\mapsto e^{2\pi i \langle x, \xi \rangle}$ for $\xi\in\R^d$, then we can take $\Sigma_{H^\perp}=[-1/2,1/2)^d \times \{e\} \times K_0$. We assume that the Haar measure $\mu_{\widehat G}$ is normalized such that $\mu_{\widehat{G}} (\Sigma_{H^{\perp}})  = 1$.

Under these assumptions, Gr\"ochenig, Kutyniok, and Seip \cite{grochenig2008landau} showed the following extension of the classical result of Landau \cite{landau1967necessary} on sampling and interpolation sets for the Paley-Wiener space from the setting of $\R^d$ to LCA groups. 
We state an alternative statement phrased in terms of frames and Riesz sequences. Given $g \in G$, we write $e_g \in \widehat{\widehat{G}}$ for $e_g (\chi) = \chi(g)$ with $\chi \in \widehat{G}$.

\begin{theorem}[\cite{grochenig2008landau}] \label{thm:landau}
Let $G$ be a second countable LCA group whose dual group $\widehat{G}$ is compactly generated. Let $H$ be the reference lattice in $G$ and normalize the Haar measure on $\widehat{G}$ so that the fundamental domain of $\widehat{G}/H^{\perp}$ has measure $1$. 

For any compact subset $\Omega \subseteq \widehat{G}$, the following assertions hold:
\begin{enumerate}[(i)]
\item If $T \subseteq G$ is such that $\{ e_{t} \mathds{1}_{\Omega} \}_{t \in T}$ forms a frame for $L^2 (\Omega)$, then
\[
 D_H^- (T) \geq \mu_{\widehat G}(\Omega).
\]
\item 
 If $T \subseteq G$ is such that $\{ e_{t} \mathds{1}_{\Omega} \}_{t \in T}$ forms a Riesz sequence in $L^2 (\Omega)$, then
\[
 D_H^+ (T) \leq \mu_{\widehat G}(\Omega).
\]
\end{enumerate}
\end{theorem}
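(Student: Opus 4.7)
The plan is to follow the strategy of Landau's original argument on $\R^d$, adapted to the LCA setting via the compact generation hypothesis on $\widehat G$. By the structure theorem, $\widehat G \cong \R^d \times \Z^n \times K_0$ and correspondingly $G \cong \R^d \times \T^n \times D_0$, with reference lattice $H = \Z^d \times \{e\} \times D_0$ and annihilator $H^\perp$ such that $\mu_{\widehat G}(\Sigma_{H^\perp}) = 1$. The heart of the argument is to compare two natural orthogonal projections on $L^2(G)$: the band-limiting projection $P_\Omega$ (multiplication by $\mathds{1}_\Omega$ on the Fourier side) and the time-limiting projection $Q_L$ (multiplication by $\mathds{1}_L$) for large compact $L \subseteq G$. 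One has $\tr(P_\Omega Q_L P_\Omega) = \mu_{\widehat G}(\Omega) \mu_G(L)$, which quantifies the effective dimension of functions simultaneously concentrated on $L$ in $G$ and on $\Omega$ in $\widehat G$.

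For assertion (i), I would proceed via a homogeneous approximation property (HAP) for the frame. After constructing a suitable dual frame $\{\widetilde{\phi_t}\}_{t \in T}$ with good localization on $G$, one shows that for each $\eta > 0$ there is a compact $K = K(\eta) \subseteq G$ such that, for every $h \in H$ and every translate, the element $e_h \mathds{1}_\Omega$ is approximated to within $\eta$ by its projection onto $\spa\{e_t \mathds{1}_\Omega : t \in (h+K)\cap T\}$. Combining this with the fact that the exponentials $\{e_h \mathds{1}_\Omega\}_{h \in H \cap (x + L)}$ span a subspace of dimension approximately $\mu_{\widehat G}(\Omega)\,\mu_G(L)$ (via $\tr(P_\Omega Q_L P_\Omega)$), a dimension-counting argument yields
\[
 \#(T \cap (x + K + L)) \ge (1-\eta)\,\mu_{\widehat G}(\Omega)\,\#(H \cap (x + L)) - o(\mu_G(L)),
\]
uniformly in $x \in G$, which translates into $\alpha H \preccurlyeq T$ for every $\alpha < \mu_{\widehat G}(\Omega)$.

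For assertion (ii), I would run the dual argument. A Riesz sequence $\{e_t \mathds{1}_\Omega\}_{t \in T}$ admits a biorthogonal system that also satisfies a HAP, and the synthesis map from $\ell^2(T \cap (x + L))$ into the Paley--Wiener-type space $P_\Omega(L^2(G))$ is injective with uniformly bounded inverse. Since the image has dimension bounded by $\mu_{\widehat G}(\Omega)\mu_G(L) + o(\mu_G(L))$ after enlarging $L$ by a compact correction, this gives $\#(T \cap (x + L)) \le (1+\eta)\mu_{\widehat G}(\Omega)\mu_G(L) + o(\mu_G(L))$, i.e.\ $T \preccurlyeq \alpha H$ for every $\alpha > \mu_{\widehat G}(\Omega)$.

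The main obstacle, and the technical core, is establishing the HAP in the LCA setting. On $\R^d$ this rests on explicit dual frames with Schwartz-type decay; for $G = \R^d \times \T^n \times D_0$ one must use the structure theorem to build approximate duals exhibiting rapid decay along the $\R^d$ component while being automatically compactly supported in the $\T^n$ and localized along $D_0$ factors. A secondary subtlety is converting the asymptotic dimension estimates into the uniform relation $\preccurlyeq$ from Definition~\ref{ud}; for this one exploits the F\o lner-type exhaustion by boxes compatible with $H$ together with the normalization $\mu_{\widehat G}(\Sigma_{H^\perp}) = 1$, which identifies the density of $H$ with the Plancherel measure of its dual fundamental domain.
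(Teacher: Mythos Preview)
The paper does not prove this theorem; it is quoted from Gr\"ochenig, Kutyniok, and Seip \cite{grochenig2008landau} and used as a black box. There is therefore no proof in the paper to compare your proposal against.

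That said, your sketch is broadly in the spirit of the original reference: the trace identity $\tr(P_\Omega Q_L P_\Omega) = \mu_{\widehat G}(\Omega)\,\mu_G(L)$ and the dimension-counting comparison against the reference lattice $H$ are indeed the core of Landau's method as adapted to LCA groups. However, your outline remains a plan rather than a proof. The step you yourself flag as the ``main obstacle''---establishing a homogeneous approximation property with well-localized dual frames on $G \cong \R^d \times \T^n \times D_0$---is the entire technical content, and you have not indicated how to actually construct such duals or why rapid decay along the $\R^d$ factor alone suffices when $D_0$ may be an arbitrary countable discrete group with no decay structure. In \cite{grochenig2008landau} this is handled not via explicit dual frames but through a direct comparison argument using the relation $\preccurlyeq$ and the reproducing kernel of $\PW_\Omega$, which avoids the need for a HAP in the strong form you describe. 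If you wish to carry out your version, you would need to supply that localization argument in full; otherwise, citing \cite{grochenig2008landau} as the present paper does is the appropriate course.
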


\subsection{Frames near the critical density}
In \cite{agora2015multi}, Agora, Antezana and Cabrelli showed the existence of frames near the critical density whenever the dual group is compactly generated. The following result complements this result by also providing frame bounds involving the spectrum. 

\begin{theorem} \label{thm:main_lca}
There exist absolute constants $c, c' > 0$ with the following property:

Let $G$ be a second countable LCA group such that its dual $\widehat G$ is compactly generated. Let $H$ be the reference lattice in $G$. Suppose the Haar measure of $\widehat G$ is normalized such that the fundamental domain of $\widehat{G}/H^\perp$ has measure $1$.

Given $\ve > 0$ and a compact set $\Omega \subseteq \widehat{G}$, there exists $T \subseteq G$ of uniform density $D_H(T) \leq (1+\varepsilon) \mu_{\widehat{G}} (\Omega)$ such that
\begin{align} \label{eq:lca}
 A(\varepsilon) \mu_{\widehat G}(\Omega)  \| f \|^2 \leq \sum_{t \in T} |\langle f, e_t \rangle |^2 \leq B(\varepsilon) \mu_{\widehat G} (\Omega) \| f \|^2 \qquad \text{for all } f \in L^2 (\Omega; \mu_{\widehat G}),
\end{align}
where the constants $A(\ve)$ and $B(\ve)$ are given by
\begin{align} \label{eq:frameconstants}
A(\varepsilon) = c  \big(1 - 1/ \sqrt{1+\varepsilon/4} \big)^2
\quad \text{and} \quad 
B(\varepsilon) = c' \big(1 - 1/\sqrt{1+\varepsilon/4}\big)^{-4}.
\end{align}
\end{theorem}

To prove Theorem \ref{thm:main_lca}, we will construct a frame for an associated elemental quotient group and then lift this frame to the setting of the theorem. We will consider both steps in the following subsections. The proof of Theorem \ref{thm:main_lca} will be given in Section \ref{sec:quasidyadic}.

\subsection{Elemental groups}
We start by proving a version of Theorem \ref{thm:main_lca} under the additional assumption that the group $G$ is an elemental group, which means that it is isomorphic to $\mathbb{R}^d \times \mathbb{T}^n \times \mathbb{Z}^{\ell} \times F$, where $d, n$ and $\ell$ are nonnegative integers, $\T=\R/\Z$, and $F$ is a finite abelian group.

\begin{theorem} \label{thm:ele}
Let $c, C > 0$ be the absolute constants of Lemma \ref{lem:submatrix}.

Let $G=\mathbb{R}^d \times \mathbb{T}^n \times \mathbb{Z}^{\ell} \times F$, where $d, n$ and $\ell$ are nonnegative integers and $F$ is a finite abelian group.  Let 
\[
H = \mathbb{Z}^d \times \{e\}  \times \mathbb{Z}^{\ell} \times F,
\]
be the reference lattice in $G$. Let $\mu_{\widehat G}$ be the normalized Haar measure of $\widehat G$ so that the fundamental domain of $\widehat{G}/H^\perp$ has measure $1$.
For $m \in \mathbb{N}$, define the lattice 
\begin{equation}\label{el2}
H_m=2^m \Z^d \times \{e\} \times 2^m \Z^{\ell} \times \{e\} \subseteq G,
\end{equation}
where $e$ denotes either the identity element of the group $\T^n$ or $F$. Let
\begin{equation}\label{el6}
\Sigma_m = [0, 2^{-m})^d \times \{0\} \times [0, 2^{-m})^{\ell} \times \{e\}.
\end{equation}

Let $\varepsilon > 0$. For any compact set of the form
\begin{equation}\label{el4}
\Omega = \bigcup_{i = 1}^k (\lambda_i + \Sigma_m), \quad \lambda_1, ..., \lambda_k \in {H_m}^{\perp},
\end{equation}
 there exists a set $T$ of the form
\begin{equation}\label{el5}
T=\bigcup_{j=1}^q ( h_j + H_m ), \qquad h_1,\ldots,h_q \in G,
\end{equation}
with uniform density $D_H(T) \leq  \lceil(1+\varepsilon)k\rceil /k \cdot \mu_{\widehat{G}} (\Omega)$ and such that 
\begin{align} \label{eq:intro}
c (1-1/\sqrt{1+\ve})^2 \mu_{\widehat G}(\Omega)  \| f \|^2 \leq \sum_{t \in T} |\langle f, e_t \rangle |^2 \leq C \frac{1}{(1-1/\sqrt{1+\ve})^4}  \mu_{\widehat G} (\Omega) \| f \|^2
\end{align}
for all $ f \in L^2 (\Omega; \mu_{\widehat G})$.
\end{theorem}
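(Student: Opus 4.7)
The plan is to mirror the proof of Theorem 1.1 with the finite abelian group $H/H_m$ (of order $N := [H:H_m] = 2^{m(d+\ell)}|F|$) in the role of $\Z/m\Z$, and Pontryagin duality between $H/H_m$ and $H_m^\perp/H^\perp$ in the role of the circle-DFT duality. A preliminary observation is that $\Sigma_m$ is a fundamental domain for $H_m^\perp$ in $\widehat{G}$, so that $\mu_{\widehat{G}}(\Sigma_m) = 1/N$ and hence $\mu_{\widehat{G}}(\Omega) = k/N$; the ratio $k/N$ thus plays the role of $n/m$ from the real-line proof.

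Next, I would introduce the $N \times N$ character matrix $\tilde F$ with entries $\tilde F_{h,\chi} = \overline{\chi(h)}$, indexed by $h \in H/H_m$ and $\chi \in H_m^\perp/H^\perp$; the rescaling $\tilde F/\sqrt{N}$ is unitary. The $N \times k$ submatrix $\tilde F_I$ cut out by the columns corresponding to $\lambda_1,\dots,\lambda_k$ has rows of equal squared $2$-norm $k/N$ (all character values being unimodular), so \Cref{lem:submatrix} applies and produces a row index set $J \subseteq H/H_m$ with $\#J = q \leq \lceil(1+\varepsilon)k\rceil$ such that
\[
c\bigl(1-1/\sqrt{1+\varepsilon}\bigr)^2 k\|v\|^2 \;\le\; \|\tilde F_I(J)v\|^2 \;\le\; C\bigl(1-1/\sqrt{1+\varepsilon}\bigr)^{-4} k\|v\|^2
\]
for all $v \in \C^k$. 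Setting $T := \bigcup_{j \in J}(h_j + H_m)$ with $h_j \in H$ a chosen coset representative of $j \in J$, the density bound $D_H(T) = q/N \leq \lceil(1+\varepsilon)k\rceil/k \cdot \mu_{\widehat{G}}(\Omega)$ is immediate.

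To convert the submatrix inequality into the frame inequality for $\{e_t\mathds{1}_\Omega\}_{t \in T}$, I would decompose $L^2(\Omega) = \bigoplus_{i=1}^k L^2(\lambda_i + \Sigma_m)$, translate each summand via $\tilde f_i(\xi) := f_i(\xi+\lambda_i) \in L^2(\Sigma_m)$, and form the pointwise vector $v(\xi) := (\tilde f_i(\xi))_{i=1}^k$. The calculation rests on two identities: $e_h(\lambda_i) = 1$ whenever $h \in H_m$ and $\lambda_i \in H_m^\perp$ (the analog of $e_{ml}(k/m) = 1$ in the proof of \Cref{thm:intro}), and the fact that $\{e_h\mathds{1}_{\Sigma_m}\}_{h \in H_m}$ is an orthogonal basis of $L^2(\Sigma_m)$ with squared norm $\mu_{\widehat{G}}(\Sigma_m) = 1/N$. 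Unwinding the inner products and applying Parseval on $L^2(\Sigma_m)$ should yield
\[
\sum_{t \in T} |\langle f, e_t\rangle|^2 = \frac{1}{N}\int_{\Sigma_m} \|\tilde F_I(J)v(\xi)\|^2 \, d\mu_{\widehat{G}}(\xi),
\]
and the submatrix bounds from the previous paragraph, combined with $\|f\|^2 = \int_{\Sigma_m}\|v(\xi)\|^2 d\mu_{\widehat{G}}$ and $\mu_{\widehat{G}}(\Omega) = k/N$, then deliver the claimed frame inequalities.

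The hard part will be justifying the application of \Cref{lem:submatrix} in full generality, since the plan above tacitly assumes that $\lambda_1,\dots,\lambda_k$ lie in distinct cosets of $H^\perp$ inside $H_m^\perp$ (otherwise $\tilde F_I$ has repeated columns). To handle the general case, I plan to pass to an auxiliary lattice $H' \supseteq H$ containing $H_m$, obtained by adjoining a sufficiently fine finite subgroup of the torus factor $\T^n$ so that the $\lambda_i$ become distinct modulo $(H')^\perp$. The character matrix is then replaced by the $[H':H_m] \times [H':H_m]$ Fourier matrix of $H'/H_m$, the representatives $h_j$ live in $H' \subseteq G$ (consistent with the statement's allowance $h_j \in G$), and $D_H(T) = q/N$ is unaffected since the density of $H_m$ relative to $H$ depends only on the index $[H:H_m] = N$.
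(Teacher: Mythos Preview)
Your overall approach is correct and matches the paper's: form the character matrix of a finite quotient lattice containing $H_m$, apply \Cref{lem:submatrix} to the $k$ columns indexed by the $\lambda_i$, then transfer the resulting submatrix inequality to $L^2(\Omega)$ via the orthogonal basis $\{e_h\mathds{1}_{\Sigma_m}\}_{h\in H_m}$. The frame computation and the density count $D_H(T)=q/N$ are exactly as in the paper.

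There is, however, a genuine gap in your final paragraph. Refining only in the torus factor $\T^n$ is not enough to separate the $\lambda_i$ modulo $(H')^\perp$. If $H'=\Z^d\times\Z_{2^N}^n\times\Z^{\ell}\times F$, then $(H')^\perp=\Z^d\times 2^N\Z^n\times\{e\}\times\{e\}$, which still identifies points that differ by an element of $\Z^d$ in the first coordinate. Since the $\lambda_i$ lie in $2^{-m}\Z^d\times\Z^n\times\Z_{2^m}^\ell\times\widehat F$ and $\Omega$ is merely relatively compact, two of them may well differ by an integer vector in the $\R^d$ component (e.g.\ $\lambda_1=(0,0,0,e)$ and $\lambda_2=(1,0,0,e)$), and your $\tilde F_I$ would again have repeated columns. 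You must refine in the $\R^d$ direction as well: choose $N$ so large that $\Omega\subseteq [-2^{N-1},2^{N-1})^d\times[-2^{N-1},2^{N-1})^n\times\T^\ell\times\widehat F$ and take
\[
H'=2^{-N}\Z^d\times\Z_{2^N}^n\times\Z^{\ell}\times F,
\]
so that $(H')^\perp=2^N\Z^d\times 2^N\Z^n\times\{e\}\times\{e\}$ separates all $\lambda_i$. This is precisely the lattice $H_0$ the paper introduces; with that correction your argument goes through unchanged, and your remark that $D_H(T)=q/[H:H_m]=q/N$ is unaffected by the choice of $H'$ remains valid.
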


\begin{proof}
Since $G= \mathbb{R}^d \times \mathbb{T}^n \times \mathbb{Z}^{\ell} \times F$, its dual is of the form $\widehat{G}=\mathbb{R}^d \times \mathbb{Z}^n \times \mathbb{T}^{\ell} \times \widehat{F}$. The duality pairing between 
$\xi=(\xi_1,\ldots,\xi_4)\in \widehat G$ and $x=(x_1,\ldots,x_4) \in G$ is given by
\[
\xi(x):= e^{2\pi i\lan x_1,\xi_1\ran} e^{2\pi i \langle x_2,\xi_2 \ran} e^{2\pi i \langle x_3,\xi_3 \ran} \xi_4(x_4).
\]
where $\xi_4(x_4)$ denotes the value of the character $\xi_4\in \widehat F$ at $x_4 \in F$.
Thus, the annihilator of $H$ is
\[
H^\perp=\Z^d \times \Z^n \times \{e\} \times \{e\},
\]
where $e$ denotes either the identity element of the group $\T^{\ell}$ or $\widehat F$. Consequently, the set
\[
\Sigma_{H^\perp}=[-1/2,1/2)^d \times \{0\} \times \T^{\ell} \times \widehat F
\]
is a fundamental domain of $\widehat G/H^{\perp}$. Recall that we assume that the Haar measure $\mu_{\widehat G}$ is normalized such that $\mu_{\widehat{G}} (\Sigma_{H^{\perp}})  = 1$.

For $m \in \mathbb{N}$, recall the lattice 
\[
H_m=2^m \Z^d \times \{e\} \times 2^m \Z^{\ell} \times \{e\} \subseteq G,
\]
where $e$ denotes either the identity element of the group $\T^n$ or $F$. 
Its annihilator is the lattice
\[
{H_m}^\perp = 2^{-m} \mathbb{Z}^d \times \mathbb{Z}^n \times \mathbb{Z}_{2^m}^\ell \times \widehat F \subseteq \widehat G,
\]
where $\Z_{2^m}=\{z\in \T=\R/\Z: {2^m}z \equiv 0 \mod 1\}$. Then,
\[
\Sigma_m = [0, 2^{-m})^d \times \{0\} \times [0, 2^{-m})^{\ell} \times \{e\}
\]
is a fundamental domain of $\widehat G/ {H_m}^\perp$.

Since $\Omega \subseteq \widehat{G}$ is compact, there exists $N \in \mathbb{N}$ such that
\begin{equation}\label{el3}
 \Omega \subseteq \Xi:=[-2^{N-1}, 2^{N-1})^d \times [-2^{N-1}, 2^{N-1}) \mathbb{Z}^n \times \mathbb{T}^{\ell} \times \widehat F.
\end{equation}
Define the lattice 
\[
H_0=2^{-N} \Z^d \times \Z_{2^{N}}^n \times \Z^{\ell} \times F \subseteq G,
\]
where $\Z_{2^N}=\{z\in \T=\R/\Z: {2^N}z \equiv 0 \mod 1\}$. Thus, the annihilator of $H_0$ is
${H_0}^\perp = 2^N \Z^d \times 2^N \Z^n \times \{e\} \times \{e\}$. Consequently, $\Xi$  is the fundamental domain of $\widehat G/{H_0}^\perp$. 

Since $\widehat{H_0}$ is isomorphic with $\widehat{G}/{H_0}^\perp$, the family of exponentials $\{ e_h \mathds{1}_{\Xi} \}_{h \in H_0}$ is an orthogonal basis of $L^2 (\Xi)$. Likewise, the family  $\{ e_h \mathds{1}_{\Sigma_m} \}_{h \in H_m}$ is an orthogonal basis of  $L^2 (\Sigma_m)$.  Note that the order of $H_0 / H_m$ is
$ M: = 2^{(N+m)d}  2^{Nn}  2^{m\ell}  \# F$. Let $(h_i)_{i = 1}^M$ be a set of representatives of the cosets $H_0/H_m$. 

Without loss of generality we can assume that points $\lambda_1,\ldots, \lambda_k \in {H_m}^\perp$ in \eqref{el4} are distinct. By \eqref{el6} and \eqref{el3}, we can extend them to a sequence $(\lambda_i)_{i = 1}^M$  of points in ${H_m}^\perp$ such that $\Xi = \bigcup_{i = 1}^M (\lambda_i + \Sigma_m)$. This implies that  $(\lambda_i)_{i = 1}^M$ is a set of representatives of cosets of ${H_m}^\perp/{H_0}^\perp$. Define $\mathcal{F}$ to be the $M \times M$-matrix with entries $e_{h_i} (\lambda_i)$ for $i, j = 1, ..., M$.
Since the dual of the quotient group $H_0/H_m$ is ${H_m}^\perp/{H_0}^\perp$ and characters on a finite group are orthogonal, the matrix $\frac{1}{\sqrt{M}} \mathcal{F}$ is orthonormal. 

Denote the $M \times k$-submatrix of $\mathcal{F}$ whose columns are indexed by $\lambda_i$, with $i \in I := \{1, ..., k\}$, by $\mathcal{F}_I$. Then, by Lemma \ref{lem:submatrix}, there exists $J \subseteq \{ 1, ..., M \}$ of cardinality $\#J \leq \lceil(1+\varepsilon) k\rceil$ and absolute constants $c, C > 0$ such that
\begin{align} \label{eq:bss_lca}
c (1-1/\sqrt{1+\ve})^2 \frac{k}{M} \| v \|^2 \leq \frac{1}{M} \big\| \mathcal{F}_I (J) v \big\|^2 \leq C \frac{1}{(1-1/\sqrt{1+\ve})^4} \frac{k}{M} \| v \|^2
\end{align}
for all $v \in \mathbb{C}^k$, where $\mathcal{F}_I (J)$ denotes the $|J| \times k$ submatrix of $\mathcal F_I$ with rows indexed by the subset $J$.

Define the index set $T := \bigcup_{j\in J} ( h_j + H_m )$. By reindexing points $h_j$ the set $T$ is of the form \eqref{el5} with $q=\# J$. For showing that $\{ e_t \mathds{1}_{\Omega} \}_{t \in T}$ is a frame for $L^2 (\Omega)$,
note that
$L^2 (\Omega) = \bigoplus_{i = 1}^k L^2 (\lambda_i + \Sigma_m)$, and thus each $f \in L^2 (\Omega)$ can be identified with a sequence $(f_{\lambda_i} )_{i = 1}^k$ with $f_{\lambda_i} \in L^2 (\lambda_i + \Sigma_m)$, i.e., $f_{\lambda_i} (\cdot - \lambda_i) \in L^2 (\Sigma_m)$.
Hence, for $f \in L^2 (\Omega) = \bigoplus_{i = 1}^k L^2 (\lambda_i + \Sigma_m)$ we have
\begin{align*}
 \sum_{t \in T} \big|\langle f, e_t \rangle_{L^2 (\Omega)} \big|^2 &= \sum_{j \in J} \sum_{h \in H_m} \bigg| \sum_{i \in I} \langle f_{\lambda_i} , e_{h + h_j} \rangle_{L^2 (\lambda_i + \Sigma_m)} \bigg|^2 \\
 &= \sum_{j \in J} \sum_{h \in H_m} \bigg| \sum_{i \in I} \langle f_{\lambda_i} (\cdot - \lambda_i) , e_{h + h_j} (\cdot - \lambda_i) \rangle_{L^2 (\Sigma_m)} \bigg|^2.
\end{align*}
Note that
\[
 \big\langle f_{\lambda_i} (\cdot - \lambda_i), e_{h + h_j} (\cdot - \lambda_i) \big\rangle_{L^2 (\Sigma_m)} = e_{h} (\lambda_i) e_{h_j}(\lambda_i) \big\langle e_{-h_j} (\cdot) f_{\lambda_i} (\cdot - \lambda_i), e_{h}  \big\rangle_{L^2 (\Sigma_m)}
\]
and $e_{h} (\lambda_i) = 1$ for $h \in H_m$. Since $\{ e_h \mathds{1}_{\Sigma_m} \}_{h \in H_m}$ is an orthogonal basis of  $L^2 (\Sigma_m)$ we have, for fixed $j \in J$,
\begin{align*}
\sum_{h \in H_m}  & \bigg| \sum_{i \in I}  \big\langle f_{\lambda_i} (\cdot - \lambda_i),  e_{h + h_j} (\cdot - \lambda_i) \big\rangle_{L^2 (\Sigma_m)} \bigg|^2 \\
&=
\sum_{h \in H_m} \bigg| \sum_{i\in I} \big\langle e_{h_j} (\lambda_i) e_{-h_j} (\cdot) f_{\lambda_i} (\cdot - \lambda_i), e_{h} \big\rangle_{L^2 (\Sigma_m)} \bigg|^2 \\
&=  \mu_{\widehat G}(\Sigma_m) \bigg\| \sum_{i \in I} e_{h_j} (\lambda_i) e_{-h_j} (\cdot) f_{\lambda_i} (\cdot - \lambda_i) \bigg\|^2_{L^2(\Sigma_m)} \\
&= \mu_{\widehat G}(\Sigma_m)  \bigg\| \sum_{i \in I} e_{h_j} (\lambda_i) f_{\lambda_i} (\cdot - \lambda_i) \bigg\|^2_{L^2(\Sigma_m)}.
\end{align*}
Therefore, for $f \in L^2 (\Omega)$,
\[
 \sum_{t \in T} |\langle f, e_t \rangle |^2 = \mu_{\widehat G}(\Sigma_m) \int_{\Sigma_m} \bigg \| \mathcal{F}_I (J) \big(f_{\lambda_i} (x - \lambda_i) \big)_{i = 1}^k \bigg\|_{\ell^2 (J)}^2 \; d\mu_{\widehat G}(x),
\]
where $\mathcal{F}_I (J)$ denotes the $ |J| \times k$ matrix with entries $e_{h_j} (\lambda_i)$ for $j \in J$ and $i \in I$. Thus, using the inequalities \eqref{eq:bss_lca} yields
\begin{align*}
 \sum_{t \in T} | \langle f, e_t \rangle |^2 &\leq C \frac{1}{(1-1/\sqrt{1+\ve})^4}  k \mu_{\widehat G}(\Sigma_m) \int_{\Sigma_m} \big \| \big(f_{\lambda_i} (x - \lambda_i) \big)_{i = 1}^k \big\|_{\ell^2 (I)}^2 \; d\mu_{\widehat G}(x) \\
 &= C \frac{1}{(1-1/\sqrt{1+\ve})^4} \mu_{\widehat G}(\Omega) \int_{\Omega} | f (x) |^2 \; d\mu_{\widehat G}(x),
\end{align*}
and similarly 
\[
\sum_{t \in T} | \langle f, e_t \rangle |^2 \geq c (1-1/\sqrt{1+\ve})^2 \mu_{\widehat G}(\Omega)  \| f \|_{L^2 (\Omega)}^2.
\]
This shows that \eqref{eq:intro} holds. 

Since $D_H(H_m)= (2^{md+m\ell} \#F)^{-1}$, we have 
\[
D_H(T)= \#J \cdot D_H(H_m)=\#J \cdot (2^{md+m\ell} \# F)^{-1}.
\]
Since $1=\mu_{\widehat G}(\Sigma_{H^\perp})= 2^{md+m\ell} \#F \mu_{\widehat G}(\Sigma_m)$, we have 
\[
\mu_{\widehat G}(\Omega) = k  \mu_{\widehat G}(\Sigma_m) =k  (2^{md+m\ell} \# F)^{-1}.
\]
Thus, $D_H(T)= (\# J /k) \mu_{\widehat G}(\Omega) \le  \lceil(1+ \ve) k \rceil /k \cdot \mu_{\widehat G}(\Omega)$,
which completes the proof.
\end{proof}

\subsection{Lifting property of frames}
In this subsection, we show a lifting property for frames on quotient groups. A similar result for Riesz bases (resp. orthonormal bases) was shown in \cite[Theorem 5.5]{agora2015multi} (resp. \cite[Lemma 3]{grochenig2008landau}).
The validity of such a result for frames was claimed in \cite[Remark 5.6]{agora2015multi}, albeit without the proof, which we will provide.

Given a closed normal subgroup $K$ of a second countable locally compact group $G$, we say that the Haar measures $\mu_G$, $\mu_K$ and $\mu_{G/K}$ on $G$, $K$ and $G/K$, respectively, satisfy \emph{Weil's integral formula} if they satisfy the identity
\begin{align} \label{eq:weil}
\int_G f(x) \; d\mu_G (x) = \int_{G/K} \int_K f(x + k) \; d\mu_{K}(k) d\mu_{G/K} (x+K)
\end{align}
for $f \in L^1 (G)$. If two out of the three measures $\mu_G$, $\mu_K$ and $\mu_{G/K}$ are given, then the third one can be normalized such that \eqref{eq:weil} holds, see, e.g., \cite[Theorem 3.4.6]{reiter2000classical}.

The following proposition provides a proof of the claim in \cite[Remark 5.6]{agora2015multi}.

\begin{proposition} \label{lift}
Let $K \subseteq G$ be a compact subgroup of a second countable locally compact abelian group $G$. Let $\mu_K, \mu_{G/K}$ and $\mu_{G}$ be Haar measures on $K, G/K$ and $G$, respectively, so that Weil's integral formula \eqref{eq:weil} holds and $\mu_K$ is a normalized measure.

Suppose $Q \subseteq G / K$ is a Borel set such that a sequence $ \{\gamma_n\}$ of characters in $\widehat{G/K} \cong K^{\perp} \subseteq \widehat{G}$ is a frame for $L^2 (Q)$ with frame bounds $A, B > 0$. Let $\{ \kappa_m \}$ be representatives for the cosets $\widehat{G}/K^{\perp} \cong \widehat{K}$. Then $\{ \gamma_n + \kappa_m \}_{m,n}$ is a frame for $L^2 (\pi^{-1} (Q))$ with frame bounds $A, B$, where $\pi : G \to G/K$ is the canonical projection.
\end{proposition}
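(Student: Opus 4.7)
The strategy is a fiber decomposition combined with Plancherel on the compact subgroup $K$. Given $f \in L^2(\pi^{-1}(Q))$, pick a Borel section $s : G/K \to G$ of $\pi$ and, for each coset representative $\kappa_m$, define a ``partial Fourier transform along $K$'' by
\[
F_m(y+K) := \int_K f(s(y+K)+k) \, \overline{\kappa_m(s(y+K)+k)} \, d\mu_K(k), \qquad y+K \in G/K.
\]
Because $f$ vanishes off $\pi^{-1}(Q)$, the function $F_m$ is supported in $Q$; Weil's integral formula \eqref{eq:weil} applied to $|f|^2$ shows that $y+K \mapsto \int_K |f(s(y+K)+k)|^2 d\mu_K(k)$ is integrable over $Q$, so in particular $F_m \in L^2(Q)$.

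Next I would write the inner product $\langle f, \gamma_n + \kappa_m\rangle$ in terms of $F_m$. Apply Weil's formula to split the integral over $G$ into an outer integral over $G/K$ and an inner integral over $K$. Since $\gamma_n \in K^\perp$, the character $\gamma_n$ descends to a character on $G/K$ (which I denote again by $\gamma_n$) and factors out of the $K$-integral. This gives
\[
\langle f, \gamma_n + \kappa_m\rangle_{L^2(G)} \;=\; \int_{G/K} \overline{\gamma_n(y+K)} \, F_m(y+K) \, d\mu_{G/K}(y+K) \;=\; \langle F_m, \gamma_n\rangle_{L^2(Q)}.
\]
Applying the frame inequality $A \|F_m\|^2 \le \sum_n |\langle F_m, \gamma_n\rangle|^2 \le B\|F_m\|^2$ and summing over $m$ yields
\[
A \sum_m \|F_m\|_{L^2(Q)}^2 \;\le\; \sum_{m,n} |\langle f, \gamma_n+\kappa_m\rangle|^2 \;\le\; B \sum_m \|F_m\|_{L^2(Q)}^2.
\]

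It remains to identify $\sum_m \|F_m\|_{L^2(Q)}^2$ with $\|f\|_{L^2(\pi^{-1}(Q))}^2$. Since $\kappa_m(s(y+K)+k) = \kappa_m(s(y+K)) \kappa_m(k)$ and $|\kappa_m(s(y+K))|=1$, the value $|F_m(y+K)|$ equals the modulus of the $\widehat K$-Fourier coefficient at $\kappa_m|_K$ of the fiber function $k \mapsto f(s(y+K)+k)$ in $L^2(K,\mu_K)$. The hypothesis that $\{\kappa_m\}$ is a set of coset representatives for $\widehat G / K^\perp \cong \widehat K$ means that $\{\kappa_m|_K\}_m$ enumerates $\widehat K$ exactly once, and since $\mu_K$ is normalized, these characters form an orthonormal basis of $L^2(K)$. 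Hence Parseval gives $\sum_m |F_m(y+K)|^2 = \int_K |f(s(y+K)+k)|^2 d\mu_K(k)$ for a.e.\ $y+K \in Q$, and another application of Weil's formula (to $|f|^2 \mathds{1}_{\pi^{-1}(Q)}$, using $\mathds{1}_{\pi^{-1}(Q)}(x+k) = \mathds{1}_Q(x+K)$) yields $\sum_m \|F_m\|_{L^2(Q)}^2 = \|f\|_{L^2(\pi^{-1}(Q))}^2$.

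The main technical point to be careful about is the legitimacy of the fiber decomposition, namely the well-definedness and measurability of $F_m$ independently of the chosen Borel section (which follows from translation invariance of $\mu_K$) and the simultaneous use of the fiber-wise Plancherel identity on $K$ and Weil's formula on $G$. The rest is bookkeeping: chaining the three displayed inequalities/equalities above gives frame bounds $A,B$ for $\{\gamma_n+\kappa_m\}_{m,n}$ on $L^2(\pi^{-1}(Q))$, as required.
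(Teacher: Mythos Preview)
Your proposal is correct and follows essentially the same route as the paper's proof: the paper defines the bracket function $[f,\kappa_m](\pi(x)) = \int_K f(x+k)\overline{\kappa_m(x+k)}\,d\mu_K(k)$ (your $F_m$, written without an explicit section), uses Weil's formula together with $\gamma_n|_K \equiv 1$ to express $\langle f,\gamma_n+\kappa_m\rangle$ as $\langle [f,\kappa_m],\gamma_n\rangle_{L^2(Q)}$, applies the frame inequality on $L^2(Q)$, and then invokes Parseval for the orthonormal basis $\{\kappa_m|_K\}$ of $L^2(K)$ combined with Weil's formula to recover $\|f\|_{L^2(\pi^{-1}(Q))}^2$. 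The only cosmetic difference is your explicit Borel section versus the paper's implicit choice of coset representatives.
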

\begin{proof}
Let $f \in L^2 (\pi^{-1} (Q))$. Using Weil's integral formula \eqref{eq:weil}, a direct calculation gives, for arbitrary $\gamma_n, \kappa_m$,
\begin{align*}
 \langle f,\gamma_n + \kappa_m \rangle
 &= \int_{\pi^{-1} (Q)} f(x) \overline{(\gamma_n + \kappa_m)(x)} \; d\mu_{G} (x) \\
 &= \int_{Q} \int_K f(x + k) \overline{(\gamma_n + \kappa_m)(x+k)} \; d\mu_K (k) d\mu_{G / K} (\pi(x)) \\
 &=  \int_{Q} \int_K f(x + k) \overline{\kappa_m(x+k)} \; d\mu_K (k) \overline{ \gamma_n (\pi(x))} \; d\mu_{G / K} (\pi(x)),
\end{align*}
where we used that $\gamma_n (k) = 1$ for all $k \in K$. For fixed $\kappa_m$, define the function $[f, \kappa_m] : G/K \to \mathbb{C}$ by
\[
 [f, \kappa_m] (\pi(x)) = \int_K f(x + k) \overline{\kappa_m(x+k)} \; d\mu_K (k)
\]
so that
\[
 \langle f, \gamma_n + \kappa_m \rangle = \int_{Q} [f, \kappa_m ] (\pi(x)) \overline{ \gamma_n(\pi(x))} \; d\mu_{G / K} (\pi(x)).
\]
Using the assumption that $\{\gamma_n (\pi(x)) \}_{n}$ is a frame for $L^2 (Q)$ with bounds $A, B > 0$,
it follows that, for fixed $\kappa_m$,
\begin{align*}
 A \int_{Q} \big| [f, \kappa_m] (\pi(x)) \big|^2 \; d\mu_{G/K} (\pi(x))
 & \le
 \sum_{n} |\langle f, \gamma_n + \kappa_m \rangle |^2 \\
 & \leq B \int_{Q} \big| [f, \kappa_m] (\pi(x)) \big|^2 \; d\mu_{G/K} (\pi(x)).
\end{align*}
It remains therefore to show that
\[
 \sum_{m} \int_{Q} \big| [f, \kappa_m] (\pi(x)) \big|^2 \; d\mu_{G/K} (\pi(x)) =  \| f \|_{L^2 (\pi^{-1}(Q))}^2.
\]
For this, note that since $\{ \kappa_m \}_m$ is an orthonormal basis for $L^2(K, \mu_K)$,
a direct calculation gives
\begin{align*}
\int_{Q} \sum_{m} \big| [f, \kappa_m] (\pi(x)) \big|^2  \; d\mu_{G/K} (\pi(x))
&= \int_{Q} \sum_{m} \bigg| \int_K f(x + k) \overline{\kappa_m(k)} \; d\mu_K (k) \bigg|^2 \; d\mu_{G/K} (\pi(x)) \\
&= \int_{Q} \int_K | f(x + k)|^2 \; d\mu_K (k) d\mu_{G/K} (\pi(x)) \\
&=  \| f \|_{L^2 (\pi^{-1}(Q))}^2,
\end{align*}
where the last equality follows from another application of Weil's integral formula \eqref{eq:weil}. This completes the proof.
\end{proof}

\subsection{Quasi-dyadic cubes} \label{sec:quasidyadic}
In this subsection, we will assume that the dual group $\widehat{G}$ of $G$ is compactly generated. By the structure theory of such groups, $\widehat{G}$ can then be identified with $\mathbb{R}^d \times \mathbb{Z}^n \times K_0$ for nonnegative integers $d, n$ and a compact group $K_0$.
Under this assumption, we can exploit the following classical structural result, see, e.g., \cite[Theorem 9.6]{hewitt1963abstract}.

\begin{lemma}
Let $U$ be a unit neighborhood in $\widehat{G}$. There exists a compact subgroup $K \subseteq \widehat{G}$ satisfying $K \subseteq U$ and such that $
 \widehat{G} / K $ is elemental, that is
 \begin{equation}\label{eleme}
 \widehat{G}/K \cong \R^d \times \Z^n \times \T^{\ell} \times \widehat F,
 \end{equation}
where $F$ is a finite abelian group.
\end{lemma}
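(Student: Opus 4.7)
The plan is to combine the Pontryagin--van Kampen structure theorem with the classical fact that compact abelian groups are projective limits of compact abelian Lie groups. By the structure theorem for compactly generated LCA groups (already invoked in Section~\ref{sec:necdensityLCA}), identify $\widehat{G} \cong \R^d \times \Z^n \times K_0$ for some compact abelian group $K_0$. Any unit neighborhood $U \subseteq \widehat{G}$ contains a product neighborhood of the form $V_1 \times V_2 \times V_0$, where $V_0$ is a neighborhood of the identity in $K_0$. Consequently, it suffices to produce a compact subgroup $K' \subseteq V_0$ of $K_0$ such that $K_0/K'$ is a compact abelian Lie group, as then $K := \{0\} \times \{0\} \times K'$ lies in $U$ and $\widehat{G}/K \cong \R^d \times \Z^n \times K_0/K'$.

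For the core step, I would invoke Pontryagin duality for the compact abelian group $K_0$: compact subgroups $K' \subseteq K_0$ correspond bijectively to open subgroups $(K')^{\perp} \subseteq \widehat{K_0}$ of the (discrete) dual via annihilators, and the quotient $K_0/K'$ is a Lie group precisely when its dual $(K')^{\perp}$ is finitely generated as an abstract abelian group. Since every discrete abelian group is the directed union of its finitely generated subgroups, the annihilators of these subgroups constitute a neighborhood basis of the identity in $K_0$ consisting of compact subgroups with Lie group quotients. In particular, some $K' \subseteq V_0$ with $K_0/K'$ a compact abelian Lie group exists.

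Every compact abelian Lie group decomposes as $\T^{\ell} \times F$, where $\T^{\ell}$ is the identity component and $F$ is a finite abelian group; since $F \cong \widehat{F}$ (non-canonically) for any finite abelian $F$, we can rewrite $K_0/K' \cong \T^{\ell} \times \widehat{F}$. Putting this together gives $\widehat{G}/K \cong \R^d \times \Z^n \times \T^{\ell} \times \widehat{F}$, as required. The only substantive input is the classical pro-Lie structure of compact abelian groups; I expect no real obstacle beyond citing the appropriate sections of Hewitt--Ross, since the remainder is a straightforward manipulation of the structure theorem.
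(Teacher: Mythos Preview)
Your argument is correct and is essentially the standard proof of this classical fact. Note, however, that the paper does not actually prove this lemma: it is stated as a known structural result with a reference to \cite[Theorem 9.6]{hewitt1963abstract}, so there is no ``paper's own proof'' to compare against. What you have written is precisely the usual route to that theorem---reduce via the structure theorem to the compact factor $K_0$, then use that a compact abelian group is a projective limit of compact abelian Lie groups (equivalently, that annihilators of finitely generated subgroups of the discrete dual $\widehat{K_0}$ form a neighborhood basis at the identity in $K_0$). The only small point worth making explicit in a full write-up is the compactness argument showing these annihilators really are a neighborhood basis: given open $V_0 \ni e$, cover the compact set $K_0 \setminus V_0$ by finitely many sets $K_0 \setminus \ker \chi_i$ and take $K' = \bigcap_i \ker \chi_i$.
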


Agora, Antezana, and Cabrelli \cite{agora2015multi} introduced the concept of quasi-dyadic cubes, which we recall next.

\begin{definition} Let $K$ be a compact subgroup of $\widehat G$ such that $\widehat{G}/K $ is an elemental group of the form \eqref{eleme}. Let $\pi: \widehat{G} \to \widehat{G}/K $ be the canonical projection. For $m\in \N$, define the lattice $\Lambda_m$ in $\widehat{G}/K$ and its fundamental domain $\Sigma_m$ by
\begin{align*}
\Lambda_m &= 2^{-m} \mathbb{Z}^d \times \mathbb{Z}^n \times \mathbb{Z}_{2^m}^\ell \times \widehat F,
\\
\Sigma_m &= [0, 2^{-m})^d \times \{0\} \times [0, 2^{-m})^{\ell} \times \{e\}.
\end{align*}
The family of {\it quasi-dyadic cubes of generation $m$ associated to $K$} is defined as 
\[
Q^{(m)}_\lambda = \pi^{-1}(\lambda+\Sigma_m) \qquad\text{for }\lambda \in \Lambda_m.
\]
\end{definition}

The following proposition was shown in \cite[Proposition 5.3]{agora2015multi}.

\begin{lemma}\label{qd} Let $C$ be a compact set and $V$ an open set such that $C \subseteq V \subseteq \widehat{G}$. There exists a compact subgroup $K$ of $\widehat{G}$ such that $\widehat{G}/K$ is an elemental group, and quasi-dyadic cubes $Q^{(m)}_{\lambda_1}, \ldots, Q^{(m)}_{\lambda_k}$ of sufficiently large generation $m\in \N$ such that
\[
C \subseteq \bigcup_{j=1}^k Q^{(m)}_{\lambda_j} = \pi^{-1}\bigg( \bigcup_{j=1}^k (\lambda_j + \Sigma_m) \bigg) \subseteq V.
\]
\end{lemma}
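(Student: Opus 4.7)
The plan is to push the covering problem from $\widehat{G}$ down to an elemental quotient $\widehat{G}/K$, cover the compact image of $C$ there by finitely many genuine dyadic cells at a sufficiently fine scale, and then pull back through the canonical projection. First I would use the compactness of $C$ and the openness of $V$ to select a symmetric open unit neighborhood $U \subseteq \widehat{G}$ satisfying $C+U+U\subseteq V$ (standard compact-open argument). Next, I would invoke the structural lemma stated just above Definition of quasi-dyadic cubes to obtain a compact subgroup $K\subseteq U$ with $\widehat{G}/K$ elemental, i.e., isomorphic to $\mathbb{R}^d\times\mathbb{Z}^n\times\mathbb{T}^\ell\times \widehat F$ for some finite abelian group $F$. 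The constraint $K\subseteq U$ is the crucial coupling between the two scales that will reappear at the end.

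Let $\pi:\widehat{G}\to\widehat{G}/K$ denote the canonical projection, which is open. Then $\pi(C)$ is compact in the elemental group $\widehat{G}/K$ and $\pi(C+U)$ is an open neighborhood of it. A standard argument in topological groups yields a symmetric unit neighborhood $W\subseteq \widehat{G}/K$ with $\pi(C)+W\subseteq \pi(C+U)$. The quasi-dyadic cells are $\Sigma_m=[0,2^{-m})^d\times\{0\}\times[0,2^{-m})^\ell\times\{e\}$; the $\R^d$ and $\T^\ell$ coordinates scale like $2^{-m}$ while the other two factors are already degenerate, so $\Sigma_m-\Sigma_m\subseteq W$ for all sufficiently large $m$.

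Fix such an $m$ and let $\lambda_1,\dots,\lambda_k$ enumerate those $\lambda\in\Lambda_m$ for which $(\lambda+\Sigma_m)\cap\pi(C)\neq\emptyset$; compactness of $\pi(C)$ forces $k<\infty$. For each such $j$ one has $\lambda_j\in \pi(C)-\Sigma_m$, hence
\[
\lambda_j+\Sigma_m\subseteq \pi(C)+(\Sigma_m-\Sigma_m)\subseteq \pi(C)+W\subseteq \pi(C+U).
\]
Pulling back and using the identity $\pi^{-1}(\pi(A))=A+K$ together with $K\subseteq U$,
\[
C\subseteq \pi^{-1}(\pi(C))\subseteq \bigcup_{j=1}^k Q^{(m)}_{\lambda_j} = \pi^{-1}\Bigl(\bigcup_{j=1}^k(\lambda_j+\Sigma_m)\Bigr)\subseteq \pi^{-1}(\pi(C+U))=(C+U)+K\subseteq C+U+U\subseteq V,
\]
which is exactly the desired two-sided inclusion.

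The main subtlety I expect is balancing the two scales that enter the argument. Refining the generation $m$ shrinks the cells $\lambda+\Sigma_m$ inside $\widehat{G}/K$, but each such cell is inflated by the entire coset of $K$ when lifted through $\pi^{-1}$, so gains at the dyadic scale can be lost at the subgroup scale. The design choice that tames this is to fix $U$ with $C+U+U\subseteq V$ and $K\subseteq U$ before selecting $m$; once this coupling is in place, the shrinking of $\Sigma_m$ in the elemental quotient closes the argument without danger of overshooting $V$.
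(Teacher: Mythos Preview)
The paper does not actually prove this lemma; it merely cites \cite[Proposition 5.3]{agora2015multi}. Your argument, by contrast, is a complete and correct self-contained proof. The key steps all check out: the choice of $U$ with $C+U+U\subseteq V$, the use of the structural lemma to pick $K\subseteq U$, the tiling of $\widehat{G}/K$ by the translates $\lambda+\Sigma_m$, the observation that $\Sigma_m-\Sigma_m\subseteq W$ for large $m$, and the final chain of inclusions culminating in $(C+U)+K\subseteq C+U+U\subseteq V$. Your closing paragraph correctly identifies the one genuine issue in this type of argument, namely that refining $m$ cannot compensate for the fixed $K$-thickening under $\pi^{-1}$, and explains clearly how the coupling $K\subseteq U$ resolves it. Since the paper offers no proof to compare against, there is nothing further to contrast; your argument is exactly what one would expect a direct proof to look like.
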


Using the above results, we can now prove Theorem \ref{thm:main_lca}.

\begin{proof}[Proof of Theorem \ref{thm:main_lca}]
Since the dual group $\widehat{G}$ is compactly generated, by the structure theory of LCA groups, we can identify $\widehat G$ with the group $\R^d \times \Z^n \times K_0$ for some compact group $K_0$.
Thus, $G$ is of the form $\R^d \times \T^n \times D_0$ for some countable discrete group $D_0$. Let $H=\Z^d \times \{e\} \times D_0$ be the reference lattice of $G$, where $e$ is the identity element of $\T^n$. Let $\Sigma_{H^{\perp}} = [-1/2,1/2)^d \times \{e\} \times K_0$ be a fundamental domain of $H^{\perp} \subseteq \widehat{G}$. We assume that the Haar measure $\mu_{\widehat G}$ is normalized such that $\mu_{\widehat{G}} (\Sigma_{H^{\perp}})  = 1$.

Let $\delta>0$ be sufficiently small such that
\begin{equation}\label{m9}
(1+\ve/2) (\mu_{\widehat G}(\Omega)+ \delta) \le (1+\ve) \mu_{\widehat G}(\Omega).
\end{equation}
Since the Haar measure is regular, there exists an open set $V\subseteq \widehat{G}$ such that $\Omega \subseteq V$ and the measure $\mu_{\widehat G}(V \setminus \Omega) < \delta$. By Lemma \ref{qd} there exists
a compact subgroup $K$ of $\widehat{G}$ such that $\widehat{G}/K$ is an elemental group of the form 
\begin{equation}\label{m5}
 \widehat{G}/K \cong \R^d \times \Z^n \times \T^{\ell} \times \widehat F,
\end{equation}
 where $d, n$ and $\ell$ are nonnegative integers and $F$ is a finite abelian group. Hence, we necessarily have $K \subseteq \{0\} \times \{0\} \times K_0$. In addition, Lemma \ref{qd} yields quasi-dyadic cubes $Q^{(m)}_{\lambda_1}, \ldots, Q^{(m)}_{\lambda_{k}}$ of generation $m \in \N$ such that
\[
\Omega \subseteq \bigcup_{j=1}^{k} Q^{(m)}_{\lambda_j}  \subseteq V.
\]
Moreover, by choosing $m\in \N$ to be sufficiently large, $k$ can be made arbitrarily large so that the inequality $\lceil (1+\ve/4) k\rceil \le (1+\ve/2) k$ holds.
 
 By the Pontryagin duality $G \cong \widehat{\widehat G}$,  we can identify the annihilator $K^\perp$ as a subgroup of $G$ by
\[
K^\perp=\{x\in G: \xi(x)=1 \ \forall \xi \in K\} %= \Z^d \times \Z^n \times \{e\} \times \{e\},
\]
The group isomorphism $\widehat{\widehat{G}/K} \cong K^\perp$ and \eqref{m5} imply that 
\[
\widehat{G}/K \cong \widehat{K^\perp} \cong \R^d \times \Z^n \times \T^{\ell} \times \widehat F .
\]
Hence, $\widehat{G}/K$ is the dual group of  
\[
K^\perp  \cong \R^d \times \T^n \times \Z^{\ell} \times F.
\] 
 The reference lattice for  $K^\perp$ is 
\[
H_0 = \mathbb{Z}^d \times \{e\} \times \Z^{\ell} \times F,
\]
where $e$ is the identity element of the group $\T^n$. We assume that  $\mu_K, \mu_{\widehat G/K}$ and $\mu_{\widehat G}$ are Haar measures on $K, \widehat G/K$ and $\widehat G$, respectively, so that Weil's integral formula \eqref{eq:weil} holds and $\mu_K$ is normalized measure. Let $\Sigma_{{H_0}^{\perp}} =[-1/2,1/2)^d \times \{e\} \times \T^\ell \times \widehat F $ be a fundamental domain of ${H_0}^{\perp} \subseteq \widehat{G}/K$. Let $\pi: \widehat G \to \widehat G/K$ be the canonical projection. Since $\pi(\Sigma_{H^\perp})= \Sigma_{H_0^{\perp}} $, we have the correct normalization of the Haar measure $\mu_{\widehat G/K}(\Sigma_{{H_0}^{\perp}})=1$.

We are now ready to apply Theorem \ref{thm:ele} to the compact set
\[
\Omega_0 := \bigcup_{i=1}^{k} (\lambda_i + \Sigma_{m}) \subseteq \widehat{G}/K.
\]
Applying Theorem \ref{thm:ele} with $\ve$ replaced by $\ve/4$ yields an index set $T_0 \subseteq K^\perp$ of the form
\[
T_0=\bigcup_{j=1}^q ( h_j + H_m ), \qquad h_1,\ldots,h_q \in K^{\perp} \subseteq G,
\]
with uniform density 
\begin{equation}\label{m7}
D_{H_0}(T_0) \leq \lceil(1+\varepsilon/4) k \rceil / k \cdot \mu_{\widehat{G}/K} (\Omega_0)
\le (1+\varepsilon/2) \mu_{\widehat{G}/K} (\Omega_0)
\end{equation}
 such that
\begin{align} \label{m10}
 c(\varepsilon/4) \mu_{\widehat G/K}(\Omega_0)  \| f \|^2 \leq \sum_{t \in T_0} |\langle f, e_t \rangle |^2 \leq C(\varepsilon/4) \mu_{\widehat G/K} (\Omega_0) \| f \|^2 
\end{align}
for all $ f \in L^2 (\Omega_0; \mu_{\widehat G/K})$, where the constants $c(\ve)$ and $C(\ve)$ for $\varepsilon > 0$ are given by
\[
c(\ve) := c \cdot (1-1/\sqrt{1+\varepsilon})^2 \quad \text{and} \quad C(\ve) := C \cdot (1 - 1/\sqrt{1+\varepsilon})^{-4},
\]
respectively. 

For constructing a frame for $L^2 (\Omega)$, let $\{ \kappa_i \}$ be representatives for the cosets $G/K^{\perp} \cong \widehat{K}$, and define
\[
T := \bigcup_i (\kappa_i + T_0).
\]
By Proposition \ref{lift}, the system $\{e_t \mathds{1}_{\pi^{-1} (\Omega_0)} \}_{t\in T}$ is a frame for $L^2 (\pi^{-1} (\Omega_0))$ with the same frame bounds as in \eqref{m10}. Since $\Omega \subseteq \pi^{-1}(\Omega_0) \subseteq V$ we have
\[
\mu_{\widehat G}(\Omega) \le \mu_{\widehat G}(\pi^{-1}(\Omega_0)) = \mu_{\widehat G/K}(\Omega_0) \le \mu_{\widehat G}(\Omega) +\delta.
\]
Hence, for all $f \in L^2 (\Omega; \mu_{\widehat G})$,
\begin{align} \label{m12}
 c(\varepsilon/4) \mu_{\widehat G}(\Omega)  \| f \|^2 \leq \sum_{t \in T} |\langle f, e_t \rangle |^2 \leq C(\varepsilon/4) (\mu_{\widehat G} (\Omega)+\delta) \| f \|^2.
\end{align}
By \eqref{m9} this yields the frame inequalities \eqref{eq:lca} with 
constants
\[
A(\varepsilon) := c \cdot \big(1 - 1/ \sqrt{1+\varepsilon/4} \big)^2
\]
and 
\[
B(\varepsilon) := c' \cdot (1 - 1/\sqrt{1-\varepsilon/4})^{-4}
\]
for $c' = 2C$. 

Note that the reference lattice $H_0$ of the group $K^\perp$ is a disjoint union of $2^{m(d+\ell)}$ translates of the lattice $H_m$ given by \eqref{el2}. Hence, $D_{H_0}(H_m)= 2^{-m(d+\ell)}$. Consequently, $D_{H_0}(T_0)= q2^{-m(d+\ell)}$. Likewise, the reference lattice $H=\bar\pi^{-1}(H_0)$ of the group $G$ is a disjoint union of $2^{m(d+\ell)}$ translates of the lattice $\bar\pi^{-1}(H_m)$, where $\bar \pi: G \to G/K^\perp$ is the canonical projection.  Hence, $D_H(T)=q D_H(\bar \pi^{-1}(H_m))= q 2^{-m(d+\ell)}$. This shows that the uniform density of $T_0$ is the same as $T=\bar \pi^{-1}(T_0)$, that is,
$
D_H(T)= D_{H_0}(T_0)$.
Therefore, by \eqref{m9} and \eqref{m7} we have
\[
\begin{aligned}
D_H(T)= D_{H_0}(T_0) &\leq (1+\varepsilon/2) \mu_{\widehat{G}/K} (\Omega_0)
= (1+\varepsilon/2) \mu_{\widehat{G}} (\pi^{-1}(\Omega_0)) \\
&\le (1+\ve/2) (\mu_{\widehat G}(\Omega) +\delta)
\le (1+\ve) \mu_{\widehat G}(\Omega).
\end{aligned}
\]
This completes the proof of Theorem \ref{thm:main_lca}.
\end{proof}

\subsection{Beurling and Leptin densities} \label{sec:beurling}
In this subsection, we discuss the relation between the uniform densities, which were defined in \cite{grochenig2008landau}, and the notions of Beurling and Leptin densities considered in the papers \cite{richard2020on, pogorzelski2022leptin, enstad2024dynamical}. The latter notions of densities allow us to prove a version of Theorem \ref{thm:main_lca} without the assumption that the dual group is compactly generated.

We assume that $G$ is a second countable locally compact group with left Haar measure $\mu_G$. Although $G$ might be nonabelian, we continue to write the group law additively.

We start by recalling notions of F\o lner sequences. A sequence of nonnull compact sets $K_n \subseteq G$ is said to be a \emph{F\o lner sequence} in $G$ if it satisfies, for all compact sets $K \subseteq G$,
\[
\lim_{n \to \infty} \frac{\mu_G ((K+K_n) \Delta K_n )}{\mu_G (K_n)} = 0, 
\]
where $(K_n + K) \Delta K_n$ denotes the symmetric difference between $K_n +K$ and $K_n$. For a general second countable locally compact group $G$, the existence of a F\o lner sequence is one of the many characterizations of the group $G$ being \emph{amenable} \cite{paterson1988amenability}; see \cite[Theorem 4 and Corollary 6]{emerson1968ratio}. In particular, any abelian group $G$ admits a F\o lner sequence; a construction of such a sequence is given in (the proof of) \cite[Theorem 5]{emerson1968ratio}.
A F\o lner sequence $(K_n)_{n \in \mathbb{N}}$ is said to be a \emph{strong F\o lner sequence} if it satisfies the stronger condition
\begin{align} \label{eq:strongfolner}
 \lim_{n \to \infty} \frac{\mu_G((K + K_n) \cap (K + K_n^c ))}{\mu_G(K_n)} = 0
\end{align}
for all compact sets $K \subseteq G$. 
If $(K_n)_{n \in \mathbb{N}}$ is a F\o lner sequence and $L \subseteq G$ is a compact symmetric unit neighborhood, then $(L + K_n )_{n \in \mathbb{N}}$ is a strong F\o lner sequence in $G$, see, e.g., 
\cite[Proposition 5.10]{pogorzelski2022leptin}. 

Following the work of Pogorzelski, Richard, and Strungaru \cite{pogorzelski2022leptin}, given any strong F\o lner sequence $(K_n)_{n \in \mathbb{N}}$ in $G$, the associated \emph{lower Beurling density} of a discrete set $T \subseteq G$ is  defined as
\[
 D^- (T) := \liminf_{n \to \infty} \inf_{x \in G} \frac{\#(T \cap (K_n+x))}{\mu_G(K_n)}
\]
and the \emph{upper Beurling density} of $T$ is
\[
 D^+ (T) := \limsup_{n \to \infty} \sup_{x \in G} \frac{\#(T \cap (K_n+x))}{\mu_G(K_n)}
\]
The significance of the use of strong F\o lner sequences in defining Beurling densities is that the densities can be shown to be independent of such a sequence whenever the group $G$ is unimodular, meaning that left Haar measure is also right-invariant. Before stating this result, we also recall the Leptin densities introduced in \cite{pogorzelski2022leptin}. 

The \emph{lower} and \emph{upper Leptin density} of a uniformly separated set $T$ are defined by
\[
 L^- (T) := \sup_{K \in \mathcal{K}} \inf_{K' \in \mathcal{K}_p} \frac{\# (T \cap (K + K'))}{\mu_G (K')}
\]
respectively
\[
L^+ (T) := \inf_{K \in \mathcal{K}} \sup_{K' \in \mathcal{K}_p} \frac{\# (T \cap K')}{\mu_G (K + K')},
\]
where $\mathcal{K}$ (resp. $\mathcal{K}_p$) denotes the set of nonempty compact sets (resp. the set of compact sets of positive Haar measure). The relation between Beurling densities and Leptin densities is given by the following result, cf. \cite[Proposition 5.14]{pogorzelski2022leptin}.

\begin{lemma}[\cite{pogorzelski2022leptin}] \label{lem:beurlingleptin}
Let $G$ be a second countable unimodular locally compact group admitting a (strong) F\o lner sequence.
 Let $T \subseteq G$ be a discrete set. Then
 \[
  D^-(T) = L^- (T) \quad \text{and} \quad D^+ (T) = L^+(T).
 \]
In particular, the Beurling densities are independent of the choice of strong F\o lner sequence.
\end{lemma}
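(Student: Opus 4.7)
The strategy is to prove each equality $D^\pm(T) = L^\pm(T)$ via matching inequalities. Since the Leptin densities make no reference to a F\o lner sequence, the announced independence of the Beurling densities from the choice of strong F\o lner sequence will follow immediately from $D^\pm(T) = L^\pm(T)$. Throughout, I would exploit that a uniformly separated $T$ admits a compact symmetric identity neighborhood $U$ with the translates $t + U$, $t \in T$, pairwise disjoint, yielding the fundamental point-counting estimate
\[
\#(T \cap A) \leq \frac{\mu_G(A + U)}{\mu_G(U)}
\]
valid for every bounded measurable set $A \subseteq G$.

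For the inequality $L^-(T) \geq D^-(T)$, the plan is an averaging (Fubini) argument. Given $\varepsilon > 0$, choose $n_0$ with $\inf_{x \in G} \#(T \cap (K_{n_0}+x))/\mu_G(K_{n_0}) \geq D^-(T) - \varepsilon$ and set $K := K_{n_0}$. A direct computation (using unimodularity to secure $\mu_G(-K_{n_0}) = \mu_G(K_{n_0})$) gives, for any $K' \in \mathcal{K}_p$,
\[
\int_{K'} \#(T \cap (K_{n_0} + x))\, d\mu_G(x) = \sum_{t \in T} \mu_G\bigl(K' \cap (-K_{n_0} + t)\bigr) \leq \mu_G(K_{n_0}) \cdot \#(T \cap (K_{n_0} + K')),
\]
the inequality arising because only those $t$ with $t \in K_{n_0}+K'$ contribute nontrivially, and each contribution is at most $\mu_G(-K_{n_0}) = \mu_G(K_{n_0})$. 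Bounding the left-hand side from below by $(D^-(T)-\varepsilon)\mu_G(K_{n_0})\mu_G(K')$ and dividing through yields $\#(T \cap (K+K'))/\mu_G(K') \geq D^-(T) - \varepsilon$ for every $K' \in \mathcal{K}_p$; taking $\inf_{K'}$ and then $\sup_K$ gives $L^-(T) \geq D^-(T) - \varepsilon$, and letting $\varepsilon \downarrow 0$ finishes this direction.

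For the reverse inequality $L^-(T) \leq D^-(T)$, fix any compact set $K$ containing the identity and, for each $n$ and $x \in G$, test the Leptin infimum at $K' := K_n + x \in \mathcal{K}_p$. The surplus $((K + K_n) \setminus K_n) + x$ is contained in $\bigl((K + K_n) \cap (K + K_n^c)\bigr) + x$, whose Haar measure is $o(\mu_G(K_n))$ uniformly in $x$ by the strong F\o lner condition \eqref{eq:strongfolner} and the right-translation invariance of $\mu_G$ afforded by unimodularity. The point-counting estimate, applied to this surplus with a slightly enlarged neighborhood, then gives
\[
\#(T \cap (K+K_n+x)) - \#(T \cap (K_n+x)) = o(\mu_G(K_n)) \quad \text{uniformly in } x.
\]
Dividing by $\mu_G(K_n+x) = \mu_G(K_n)$, taking $\inf_x$, and passing to the liminf in $n$ yields $\inf_{K'} \#(T \cap (K+K'))/\mu_G(K') \leq D^-(T)$; taking $\sup_K$ gives $L^-(T) \leq D^-(T)$.

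The equality $D^+(T) = L^+(T)$ is proved analogously with suprema and infima suitably interchanged; now the thickening by $K$ appears in the denominator $\mu_G(K+K')$ of the Leptin definition rather than in the numerator, but the same averaging and boundary-control arguments apply. The main technical obstacle is ensuring that the boundary estimates in the second direction are uniform in the translation parameter $x$, together with handling the non-commutativity of the Minkowski sum when $G$ is not abelian; both issues are resolved by the combination of unimodularity (which gives both left- and right-translation invariance of $\mu_G$, in particular $\mu_G(A + x) = \mu_G(A)$ for every $x$ and every measurable $A$) with the strong (rather than merely ordinary) F\o lner property \eqref{eq:strongfolner}.
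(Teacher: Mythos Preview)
The paper does not supply its own proof of this lemma; it is quoted verbatim as \cite[Proposition~5.14]{pogorzelski2022leptin} and used as a black box. There is therefore no ``paper's proof'' to compare against. Your sketch is a reasonable self-contained argument and captures the two essential mechanisms behind the result: an averaging (Fubini) step to pass from pointwise F\o lner bounds to the Leptin supremum/infimum, and the strong F\o lner boundary estimate to control the surplus $K+K_n$ versus $K_n$. One small point: in bounding $\#\bigl(T\cap((K+K_n)\setminus K_n)+x\bigr)$ via the point-counting inequality you need control of $\mu_G\bigl(((K+K_n)\setminus K_n)+U\bigr)$, not just of $\mu_G((K+K_n)\setminus K_n)$; the clean way is to apply the strong F\o lner condition \eqref{eq:strongfolner} with the enlarged compact set $K+U$ in place of $K$, which absorbs the $+U$. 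In the abelian setting of the paper this is routine; in the nonabelian case the order of Minkowski sums requires a bit more care, as you note.
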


We will say that a discrete set $T$ has uniform Beurling (resp. Leptin) density if $D^- (T) = D^+ (T)$ (resp. $L^- (T) = L^+ (T)$), and denote its density by $D(T)$ (resp. $L(T)$). If $H$ is a uniform lattice in $G$ with fundamental domain $Q_H$, then it has uniform Leptin density
\[
 L(H) = \frac{1}{\mu_{G} (Q_H)},
\]
cf. \cite[Proposition 6.1]{pogorzelski2022leptin}.

In addition to the previous lemma, we will also use a relation between uniform densities (see Section \ref{sec:necdensityLCA}) and Leptin densities. As for abelian groups, given two counting measures $\delta_T$ and $\delta_H$ of discrete subsets $T$ and $H$ of $G$, we write $\delta_T \preceq \delta_H$ if for every $\ve > 0$, there exists a compact set $K \subseteq G$ such that
\[
(1-\ve) \delta_T (K) \leq \delta_H (K + L)
\]
for all compact sets $L \subseteq G$. Using this notation, the following result corresponds to \cite[Proposition C.2]{pogorzelski2022leptin}.

\begin{lemma}[\cite{pogorzelski2022leptin}] \label{lem:densities}
 Let $H \subseteq G$ be a uniform lattice with fundamental domain $Q_H \subseteq G$. If $T \subseteq G$ is a discrete set, then
 \[
 \frac{ \sup \{\alpha \in [0, \infty) : \alpha \delta_H \preceq \delta_T \}}{\mu_G (Q_H)} = L^- (T) \quad \text{and} \quad \frac{ \inf \{\alpha \in [0, \infty) :  \delta_T \preceq \alpha \delta_H \} }{\mu_G (Q_H)} = L^+ (T),
 \]
 where  $L^-(T)$ and $L^+(T)$ denote the lower and upper Leptin densities of $T$.
\end{lemma}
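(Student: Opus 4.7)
The plan is to prove the two equalities by translating the $\preceq$ relation on counting measures into comparisons between $\#(T\cap\cdot)$ and Haar measures of slightly enlarged sets, and then converting back using the elementary \emph{sandwich estimates}
\[
\mu_G(Q_H)\cdot \#(H\cap L) \le \mu_G(L+Q_H),
\qquad
\mu_G(L) \le \mu_G(Q_H)\cdot \#(H\cap(L-Q_H)),
\]
valid for every compact $L\subseteq G$. The first estimate follows from the fact that $\bigsqcup_{h\in H\cap L}(h+Q_H)\subseteq L+Q_H$ is an essentially disjoint union, and the second from the observation that every $x\in L$ lies in a (essentially) unique translate $h+Q_H$ with $h\in H\cap(L-Q_H)$.

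For the identity involving $L^-(T)$, I would first show the ``$\leq$'' direction as follows. Suppose $\alpha\delta_H\preceq \delta_T$; then for each $\varepsilon>0$ there is a compact $K$ such that $(1-\varepsilon)\alpha\#(H\cap L)\le\#(T\cap(K+L))$ for all compact $L$. Given any $K'\in\mathcal{K}_p$, apply this with $L:=K'-Q_H$ and use the second sandwich estimate to bound $\#(H\cap L)\ge \mu_G(K')/\mu_G(Q_H)$. Setting $\tilde K:=K-Q_H$ one obtains
\[
\frac{\#(T\cap(\tilde K+K'))}{\mu_G(K')} \ge \frac{(1-\varepsilon)\alpha}{\mu_G(Q_H)}
\]
for every $K'\in\mathcal{K}_p$, so $L^-(T)\ge (1-\varepsilon)\alpha/\mu_G(Q_H)$; letting $\varepsilon\to 0$ and $\alpha\to \sup\{\alpha:\alpha\delta_H\preceq \delta_T\}$ yields one inequality. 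For the converse, if $L^-(T)>\beta/\mu_G(Q_H)$, pick $K$ witnessing this with slack $\eta>0$, and for an arbitrary compact $L$ apply the Leptin bound to $K':=L+Q_H$. The first sandwich estimate gives $\#(H\cap L)\le \mu_G(K')/\mu_G(Q_H)$, which rearranges to
\[
\#(T\cap((K+Q_H)+L)) \ge (\beta+\mu_G(Q_H)\eta)\,\#(H\cap L),
\]
proving $\beta\delta_H\preceq \delta_T$ and hence $\sup\{\alpha:\alpha\delta_H\preceq \delta_T\}\ge \beta$.

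The argument for $L^+(T)$ is entirely parallel, with the roles of the two sandwich estimates swapped. Assuming $\delta_T\preceq \alpha\delta_H$, one applies the first sandwich estimate to $K+L$ (in the form bounding $\#(H\cap(K+L))$ from above by $\mu_G((K+Q_H)+L)/\mu_G(Q_H)$) to deduce
\[
\frac{\#(T\cap K')}{\mu_G((K+Q_H)+K')} \le \frac{\alpha}{(1-\varepsilon)\mu_G(Q_H)},
\]
giving $L^+(T)\le \alpha/\mu_G(Q_H)$; conversely, starting from $L^+(T)<\beta/\mu_G(Q_H)$ one applies the second sandwich estimate to $K+L$ and concludes $\delta_T\preceq \beta\delta_H$ directly.

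The main bookkeeping issue, and the principal subtlety of the proof, is controlling how the compact set $K$ appearing in the $\preceq$ relation is modified by $\pm Q_H$ on each application of a sandwich estimate, and verifying that the resulting set $\tilde K$ remains \emph{independent} of the compact set $K'$ (or $L$) over which one ranges when taking the $\inf_{K'}$ or $\sup_{K'}$ in the Leptin density. Because $Q_H$ is compact, each such modification is harmless, and the fact that the sandwich estimates carry no residual boundary error lets us discard the $\varepsilon$-slack in the $\preceq$-relation whenever we \emph{produce} $\preceq$ from a Leptin bound (one direction per identity), which is what makes the two identities sharp.
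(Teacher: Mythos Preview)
The paper does not prove this lemma; it is quoted verbatim as \cite[Proposition~C.2]{pogorzelski2022leptin} and used as a black box. So there is no ``paper's own proof'' to compare against.

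Your argument is essentially correct and is the natural direct proof. Two small technical points deserve a line each when you write it out in full. First, a fundamental domain $Q_H$ of a uniform lattice is only relatively compact, not compact, so sets like $K-Q_H$ or $L+Q_H$ should be replaced by their closures before they are fed back into the $\preceq$ relation or into the Leptin density; this is harmless since your sandwich estimates only improve under such closures. Second, in the converse direction for $L^+$, the Leptin bound is only stated for $K'\in\mathcal{K}_p$, while $\delta_T\preceq\beta\delta_H$ must be verified for \emph{all} compact $L$, including null ones. You handle this by first enlarging $L$ to $L+\overline{Q_H}\in\mathcal{K}_p$ (assuming $0\in\overline{Q_H}$, which one may arrange), applying the bound there, and absorbing the extra $\overline{Q_H}$ into the fixed compact set. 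With these two cosmetic fixes the proof goes through exactly as you describe.
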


Using Lemma \ref{lem:beurlingleptin} and Lemma \ref{lem:densities}, the necessary density conditions of Theorem \ref{thm:landau} can be rephrased in terms of Beurling densities. This has the advantage that it allows us to remove the assumption that the dual group is compactly generated. Moreover, this version does not depend on normalization of Haar measures.  The following result was stated as \cite[Fact 1.7]{richard2020on} and \cite[Theorem 4.2]{enstad2024dynamical}.

\begin{theorem} \label{thm:landau_lca}
Let $G$ be a second countable LCA group with dual group $\widehat{G}$. Let $\Omega \subseteq \widehat{G}$ be a compact subset. Then,
\begin{enumerate}[(i)]
\item If $T \subseteq G$ is such that $\{ e_{t} \mathds{1}_{\Omega} \}_{t \in T}$ forms a frame for $L^2 (\Omega)$, then
\[
 D^- (T) \geq \mu_{\widehat G}(\Omega).
\]
\item 
 If $T \subseteq G$ is such that $\{ e_{t} \mathds{1}_{\Omega} \}_{t \in T}$ forms a Riesz sequence in $L^2 (\Omega)$, then
\[
 D^+ (T) \leq \mu_{\widehat G}(\Omega).
\]
\end{enumerate}
\end{theorem}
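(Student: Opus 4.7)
The plan is to reduce Theorem \ref{thm:landau_lca} to the compactly generated case covered by Theorem \ref{thm:landau} and then convert uniform densities into Beurling densities through Lemmas \ref{lem:beurlingleptin} and \ref{lem:densities}. Since $\Omega \subseteq \widehat{G}$ is relatively compact, I would fix an open, relatively compact, symmetric neighborhood $U$ of the identity in $\widehat{G}$ containing $\overline{\Omega}$ and set $\widehat{G}_0 := \bigcup_{n \geq 1} nU$, which is an open, compactly generated subgroup of $\widehat{G}$. Its annihilator $K := \widehat{G}_0^{\perp} \subseteq G$ is then compact, and Pontryagin duality identifies the dual of the quotient $G_0 := G/K$ with $\widehat{G}_0$. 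Choosing $\mu_K$ to be a probability measure and $\mu_{G_0}$ to satisfy Weil's formula with $\mu_G$ and $\mu_K$, the dual Plancherel normalization ensures that $\mu_{\widehat{G_0}}$ agrees with the restriction of $\mu_{\widehat G}$ to $\widehat{G}_0$, so in particular $\mu_{\widehat{G_0}}(\Omega) = \mu_{\widehat G}(\Omega)$.

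Let $\pi : G \to G_0$ be the canonical projection. Because $\Omega \subseteq \widehat{G}_0$, each restriction $e_t \mathds{1}_{\Omega}$ depends only on $\pi(t) \in G_0$ and, under $\widehat{G_0} \cong \widehat{G}_0$, equals $e_{\pi(t)} \mathds{1}_{\Omega}$. In case (ii), linear independence of a Riesz sequence forces $\pi$ to be injective on $T$, giving a Riesz sequence $\{e_s \mathds{1}_{\Omega}\}_{s \in \pi(T)}$ on $G_0$ with the same bounds. In case (i), the Bessel bound forces $T$ to be uniformly discrete, and since $K$ is compact the fiber multiplicities $m(s) := \#(T \cap \pi^{-1}(s))$ are bounded by some integer $M$; the identity
\[
\sum_{t \in T} |\langle f, e_t \rangle|^2 = \sum_{s \in \pi(T)} m(s) |\langle f, e_s \rangle|^2
\]
then yields a frame $\{e_s \mathds{1}_{\Omega}\}_{s \in \pi(T)}$ on $G_0$ with bounds $A/M$ and $B$. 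Since $\widehat{G_0}$ is compactly generated, Theorem \ref{thm:landau}, applied with a reference lattice $H \subseteq G_0$ whose fundamental domain has $\mu_{G_0}$-measure $1$, yields $D_H^-(\pi(T)) \geq \mu_{\widehat{G_0}}(\Omega)$ in case (i) and $D_H^+(\pi(T)) \leq \mu_{\widehat{G_0}}(\Omega)$ in case (ii).

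It remains to transfer these uniform densities on $G_0$ into Beurling densities on $G$. By Lemma \ref{lem:densities}, $D_H^{\pm}(\pi(T))$ coincide with the Leptin densities $L^{\pm}(\pi(T))$ on $G_0$, and by Lemma \ref{lem:beurlingleptin} the latter equal the Beurling densities $D^{\pm}_{G_0}(\pi(T))$ computed from any strong F\o lner sequence $(K_n)$ in $G_0$. For such $(K_n)$, the sequence $(\pi^{-1}(K_n))$ is strong F\o lner in $G$ with $\mu_G(\pi^{-1}(K_n)) = \mu_{G_0}(K_n)$, and the fibered counting identity
\[
\#\bigl(T \cap (\pi^{-1}(K_n) + x)\bigr) = \sum_{s \in \pi(T) \cap (K_n + \pi(x))} m(s),
\]
combined with $m(s) \geq 1$, yields $D^-_G(T) \geq D^-_{G_0}(\pi(T))$, giving the lower bound in case (i); in case (ii) the injectivity of $\pi$ on $T$ makes $m \equiv 1$, so the same identity gives $D^+_G(T) = D^+_{G_0}(\pi(T))$ and the upper bound follows. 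The main obstacle will be the careful bookkeeping of measure normalizations needed to ensure $\mu_{\widehat{G_0}}(\Omega) = \mu_{\widehat G}(\Omega)$ holds exactly, and verifying the strong-F\o lner property of $(\pi^{-1}(K_n))$ from that of $(K_n)$ via the identity $\pi^{-1}(K_n) + L = \pi^{-1}(K_n + \pi(L))$ for compact $L \subseteq G$ together with Weil's formula.
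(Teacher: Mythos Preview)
The paper does not give its own proof of Theorem \ref{thm:landau_lca}; it simply records the statement with citations to \cite{richard2020on} and \cite{enstad2024dynamical}. Your proposal therefore goes beyond what the paper does, supplying a derivation from the tools assembled in Section \ref{sec:beurling}. The architecture you choose---pass to the quotient $G_0=G/\widehat{G}_0^{\perp}$ by the annihilator of the open compactly generated subgroup generated by $\Omega$, apply Theorem \ref{thm:landau} there, convert via Lemmas \ref{lem:beurlingleptin} and \ref{lem:densities}, and pull densities back along a preimage strong F\o lner sequence---is exactly the mechanism the paper deploys in Step 2 of the proof of Theorem \ref{thm:main_lca2} (with Lemma \ref{lem:notcompactlygenerated} and Lemma \ref{lem:folnerquotient} playing the roles you assign them). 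So your route is the natural one and is consistent with how the paper handles the same reduction elsewhere.

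Two small corrections. First, the Bessel bound does not force $T$ to be uniformly discrete, only relatively separated (bounded number of points in any translate of a fixed compact set); this weaker property is still enough, together with compactness of $K$, to bound the fiber multiplicities $m(s)$ uniformly, so your frame argument for case (i) survives unchanged. Second, Theorem \ref{thm:landau} is stated with the Haar measure on $\widehat{G_0}$ normalized so that a fundamental domain of $\widehat{G_0}/H^{\perp}$ has measure $1$, not so that a fundamental domain of $G_0/H$ has $\mu_{G_0}$-measure $1$; these differ by the duality factor $\mu_{G_0}(Q_H)\cdot\mu_{\widehat{G_0}}(\Sigma_{H^{\perp}})=1$, and the rescaling needed to reconcile this with your fixed $\mu_{\widehat{G_0}}$ is precisely what Lemma \ref{lem:densities} absorbs (compare Step 1 of the proof of Theorem \ref{thm:main_lca2}, where the identical bookkeeping is carried out). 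With these adjustments your plan is sound.
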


Lastly, we rephrase Theorem \ref{thm:main_lca} in terms of Beurling densities and remove the assumption that the dual group is compactly generated.
The following two lemmata will be used for this; see \cite[Lemma 9]{grochenig2008landau} for the first lemma.

\begin{lemma}[\cite{grochenig2008landau}] \label{lem:notcompactlygenerated}
Let $G$ be a second countable LCA group with dual group $\widehat{G}$. 
Suppose that $\Omega \subseteq \widehat{G}$ is compact and let $\Gamma$ be the open subgroup in $\widehat{G}$ generated by $\Omega$. Then $\Gamma$ is compactly generated, $\Gamma^{\perp}$ is compact and every function in 
\[
\PW_{\Omega} (G) := \big\{ f \in L^2 (G) : \supp \widehat{f} \subseteq \Omega \big\}
\]
 is  $\Gamma^{\perp}$-periodic. Furthermore,  $\widehat{G/\Gamma^{\perp}} = \Gamma$.
\end{lemma}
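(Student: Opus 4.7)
\medskip

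The plan is to construct $\Gamma$ explicitly so that it is simultaneously open and compactly generated, then invoke Pontryagin duality to obtain the remaining properties. Specifically, I would fix a compact symmetric neighborhood $V$ of $0 \in \widehat{G}$ (which exists by local compactness) and set $W := \overline{\Omega} \cup (-\overline{\Omega}) \cup V$. This is a compact symmetric set containing a neighborhood of $0$, and I take $\Gamma := \bigcup_{n \geq 1} (W + W + \cdots + W)$ ($n$ summands). Then $\Gamma$ is the subgroup of $\widehat{G}$ generated by $W$; it is compactly generated by definition, and it contains $\Omega$.

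Next I would verify that $\Gamma$ is open (and hence also closed, being a union of cosets of itself): since $V \subseteq \Gamma$ and $\Gamma$ is a subgroup, $\Gamma = \bigcup_{\gamma \in \Gamma}(\gamma + V)$ is a union of open sets. Because $\Gamma$ is open in $\widehat{G}$, the quotient $\widehat{G}/\Gamma$ is a discrete abelian group, so by Pontryagin duality
\[
\Gamma^\perp \;\cong\; \widehat{\widehat{G}/\Gamma}
\]
is the dual of a discrete group and therefore compact. Similarly, since $\Gamma$ is closed, the Pontryagin duality theorem yields $\widehat{G/\Gamma^\perp} \cong (\Gamma^\perp)^\perp = \Gamma$, which gives the last statement.

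Finally, to show that every $f \in \PW_{\Omega}(G)$ is $\Gamma^\perp$-periodic, I would use the translation-modulation duality for the Fourier transform. Fix $\tau \in \Gamma^\perp$ and let $L_\tau f(t) = f(t-\tau)$; then a direct substitution gives $\widehat{L_\tau f}(\xi) = \overline{\xi(\tau)}\,\widehat{f}(\xi)$ for a.e.\ $\xi \in \widehat{G}$. Because $\widehat{f}$ is supported in $\Omega \subseteq \Gamma$, and because $\xi(\tau) = 1$ for every $\xi \in \Gamma$ and $\tau \in \Gamma^\perp$ by definition of the annihilator, we obtain $\widehat{L_\tau f} = \widehat{f}$ and hence $L_\tau f = f$ in $L^2(G)$.

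There is no real obstacle here; the only point that requires a little care is ensuring that the subgroup one constructs is genuinely open (not just a subgroup generated by $\Omega$, which need not contain a neighborhood of $0$). Adjoining the compact symmetric neighborhood $V$ resolves this while preserving compact generation, after which all remaining assertions follow from standard Pontryagin duality.
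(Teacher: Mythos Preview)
Your argument is correct. The paper does not supply its own proof of this lemma; it simply cites \cite[Lemma~9]{grochenig2008landau}. Your write-up is the standard verification: enlarge $\overline{\Omega}\cup(-\overline{\Omega})$ by a compact symmetric neighborhood of the identity to force openness, then read off compactness of $\Gamma^\perp$ and the identification $\widehat{G/\Gamma^\perp}\cong\Gamma$ from Pontryagin duality, and finally use the translation--modulation relation on the Fourier side to obtain $\Gamma^\perp$-periodicity.

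One small remark on the wording: the lemma speaks of ``the open subgroup generated by $\Omega$,'' which is naturally read as the smallest open subgroup containing $\Omega$. Your construction depends a priori on the choice of $V$, so strictly speaking you have produced \emph{an} open compactly generated subgroup containing $\Omega$. This is entirely sufficient for every use of the lemma in the paper, and in fact one can check that any two such choices yield the same $\Gamma$ (each contains a neighborhood of $0$ and hence contains the other's generating set), so your $\Gamma$ is indeed canonical. You might add one sentence to that effect if you want the statement and proof to match exactly.
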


\begin{lemma} \label{lem:folnerquotient}
Let $G$ be a second countable locally compact group and let $K \subseteq G$ be a compact normal subgroup. 
If $(K_n)_{n \in \mathbb{N}}$ is a strong F\o lner sequence in $G/K$, then $(\pi^{-1} (K_n))_{n \in \mathbb{N}}$ is a strong F\o lner sequence in $G$, where $\pi : G \to G/K$ is the canonical projection. 
\end{lemma}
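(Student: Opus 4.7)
The plan is to reduce the strong F\o lner condition for $(\pi^{-1}(K_n))$ in $G$ to the assumed strong F\o lner condition for $(K_n)$ in $G/K$, relying on two basic facts: Weil's integral formula, which makes the Haar measure of a $K$-saturated Borel set proportional to the $G/K$-measure of its projection, and a set-theoretic identity that lets us push sumsets through $\pi^{-1}$.

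First I would verify the easy preliminaries: each $\pi^{-1}(K_n)$ is compact (since $K$ is compact the projection $\pi$ is proper, so the preimage of a compact set is compact) and non-null (by Weil's formula, $\mu_G(\pi^{-1}(K_n)) = c\,\mu_{G/K}(K_n)$ for the constant $c = \mu_K(K) > 0$). Thus $(\pi^{-1}(K_n))$ is a legitimate candidate for a strong F\o lner sequence in $G$.

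The core computation is a set identity: for any compact $L \subseteq G$ and any Borel $A \subseteq G/K$,
\[
L + \pi^{-1}(A) = \pi^{-1}(\pi(L) + A).
\]
The inclusion $\supseteq$ follows by choosing preimages and using that $\pi$ is a homomorphism, while $\subseteq$ uses that $\pi^{-1}(A)$ is $K$-saturated. Combined with the trivial observation $\pi^{-1}(A)^c = \pi^{-1}(A^c)$, this yields
\[
\bigl(L + \pi^{-1}(K_n)\bigr) \cap \bigl(L + \pi^{-1}(K_n)^c\bigr) = \pi^{-1}\!\bigl((\pi(L) + K_n) \cap (\pi(L) + K_n^c)\bigr).
\]
Applying Weil's formula to both numerator and denominator, the proportionality constant cancels and one obtains
\[
\frac{\mu_G\bigl((L + \pi^{-1}(K_n)) \cap (L + \pi^{-1}(K_n)^c)\bigr)}{\mu_G(\pi^{-1}(K_n))} = \frac{\mu_{G/K}\bigl((\pi(L) + K_n) \cap (\pi(L) + K_n^c)\bigr)}{\mu_{G/K}(K_n)}.
\]
Since $\pi(L) \subseteq G/K$ is compact and $(K_n)$ is a strong F\o lner sequence in $G/K$, the right-hand side tends to $0$ as $n \to \infty$, which gives the strong F\o lner condition \eqref{eq:strongfolner} for $(\pi^{-1}(K_n))$ in $G$.

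I do not anticipate a genuine obstacle here; the only points requiring care are the verification of the displayed set identity and the bookkeeping to ensure that the Haar-measure normalization constant from Weil's formula cancels between numerator and denominator (which it plainly does, so the argument is independent of how $\mu_K$, $\mu_G$, and $\mu_{G/K}$ are normalized). Note that the proof uses neither abelianness of $G$ nor second countability in any essential way; compactness and normality of $K$, together with measurability of $K_n$, are all that is needed.
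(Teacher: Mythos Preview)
Your proposal is correct and follows essentially the same route as the paper: properness of $\pi$, the set identity $L + \pi^{-1}(A) = \pi^{-1}(\pi(L)+A)$ together with $\pi^{-1}(A)^c = \pi^{-1}(A^c)$, and Weil's formula to reduce the ratio in $G$ to the corresponding ratio in $G/K$. The only (inessential) difference is that the paper records merely the inclusion $L+\pi^{-1}(K_n)\subseteq \pi^{-1}(\pi(L)+K_n)$, which already suffices, whereas you observe the full equality.
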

\begin{proof}
Since $K$ is compact, the canonical projection $\pi : G \to G/K$ is proper, meaning that preimages of compact sets are compact. In particular, 
each preimage $ \pi^{-1} (K_n)$, $n \in \mathbb{N}$, is compact and nonnull. To verify that $(\pi^{-1} (K_n))_{n \in \mathbb{N}}$ satisfies the strong F\o lner property \eqref{eq:strongfolner}, let $L \subseteq G$ be compact. Then
\begin{align*}
\big( L + \pi^{-1} (K_n) \big) \cap \big( L + (\pi^{-1} (K_n))^c \big) 
& = \big( L + \pi^{-1} (K_n) \big) \cap \big( L + \pi^{-1} (K_n^c) \big) \\
& \subseteq \pi^{-1} \big( \pi(L) + K_n \big) \cap \pi^{-1} \big(\pi(L) + K_n^c \big) \\
& = \pi^{-1} \big( \big( \pi(L) + K_n \big) \cap \big(\pi(L) + K_n^c \big) \big).
\end{align*}
If $\mu_G$ is a Haar measure on $G$, then $\mu_{G/K} := \mu_{G} (\pi^{-1} (\cdot))$ is a Haar measure on $G/K$ by Weil's integral formula \eqref{eq:weil}. Therefore, 
\begin{align*}
\frac{\mu_G \big( \big( L + \pi^{-1} (K_n) \big) \cap \big( L + (\pi^{-1} (K_n))^c \big)\big) }{\mu_G (\pi^{-1} (K_n))} 
&\leq \frac{\mu_G \big( \pi^{-1} \big( \big( \pi(L) + K_n \big) \cap \big(\pi(L) + K_n^c \big) \big)\big) }{\mu_G (\pi^{-1} (K_n))} \\
&= \frac{\mu_{G/K} \big( ( \pi(L) + K_n \big) \cap \big(\pi(L) + K_n^c \big) \big) }{\mu_{G/K} ( K_n)}.
\end{align*} 
Since $\pi(L)$ is compact in $G/K$, the strong F\o lner property of $(K_n)_{n \in \mathbb{N}}$ yields that the right-hand side converges to $0$ as $n \to \infty$. Thus, $(\pi^{-1} (K_n))_{n \in \mathbb{N}}$ is a strong F\o lner sequence in $G$. 
\end{proof}

The following result is a reformulation of  \Cref{thm:intro_LCA} in terms of sampling in the Paley-Wiener space.

\begin{theorem} \label{thm:main_lca2}
Let $G$ be a second countable LCA group with Haar measure $\mu_G$. Let $\widehat{G}$ be its dual group with dual measure $\mu_{\widehat{G}}$. 

Let $\ve > 0$. For any compact set $\Omega \subseteq \widehat{G}$, there exists $T \subseteq G$ of uniform Beurling density $D(T) \leq (1+\varepsilon) \mu_{\widehat{G}} (\Omega)$ such that
\begin{align} \label{eq:inequalities3}
 A(\varepsilon) \mu_{\widehat G}(\Omega)  \| f \|^2 \leq \sum_{t \in T} |f(t)|^2 \leq B(\varepsilon) \mu_{\widehat G} (\Omega) \| f \|^2 \qquad \text{for all } f \in \PW_{\Omega} (G),
\end{align}
 where the constants $A(\ve)$ and $B(\ve)$ are given as in \eqref{eq:frameconstants}.
 \end{theorem}
\begin{proof}
We split the proof into two steps.
\\~\\
\textbf{Step 1.} We first prove the result under the additional assumption that $\widehat{G}$ is compactly generated.
Let $Q_H$ be the fundamental domain of the reference lattice $H$ in $G$ and let $\Sigma_{H^{\perp}}$ be the fundamental domain of $H^{\perp}$ in $\widehat{G}$. 

An application of Theorem \ref{thm:main_lca} with the scaled Haar measure $1/ \mu_{\widehat{G}} (\Sigma_{H^{\perp}}) \mu_{\widehat{G}}$  yields a set $T \subseteq G$ of uniform density
\begin{align} \label{eq:densities_normalized}
D_H(T) \leq  (1+\varepsilon) \frac{\mu_{\widehat{G}} (\Omega)}{\mu_{\widehat{G}} (\Sigma_{H^{\perp}})}
\end{align}
 such that
\begin{align*}
 A(\varepsilon) \frac{\mu_{\widehat G}(\Omega)}{\mu_{\widehat{G}} (\Sigma_{H^{\perp}})^2}  \| f \|^2 \leq \frac{1}{\mu_{\widehat{G}} (\Sigma_{H^{\perp}})^2} \sum_{t \in T} |\langle f, e_t \rangle |^2 \leq B(\varepsilon) \frac{\mu_{\widehat G}(\Omega)}{\mu_{\widehat{G}} (\Sigma_{H^{\perp}})^2} \| f \|^2
\end{align*}
for all $f \in L^2 (\Omega; \mu_{\widehat G})$. Here, $A(\varepsilon), B(\varepsilon) > 0$ are the constants given in \eqref{eq:frameconstants}.

Since $\mu_G (Q_H) = 1/\mu_{\widehat{G}} (\Sigma_{H^{\perp}})$ by duality, it follows from Lemma \ref{lem:beurlingleptin}, Lemma \ref{lem:densities} and the inequality \eqref{eq:densities_normalized} that
\[
D(T) = \frac{D_H (T)}{\mu_G (Q_H)} \leq (1+\varepsilon) \mu_{\widehat{G}}(\Omega),
\]
which yields the desired result.
\\~\\
\textbf{Step 2.} 
In this step, we prove the general case. Let $\Gamma \subseteq \widehat{G}$ be the open subgroup generated by $\Omega$. By Lemma \ref{lem:notcompactlygenerated}, $\Gamma$ is compactly generated and $\widehat{G/\Gamma^{\perp}} = \Gamma$. Suppose $\mu_{\Gamma^{\perp}}$ is the normalized measure on $\Gamma^{\perp}$ and let $\mu_{G_0}$ be Haar measure on $G_0 := G/\Gamma^{\perp}$ so that Weil's integral formula \eqref{eq:weil} holds, that is,  $\mu_{G_0} = \mu_G (\pi^{-1} (\cdot))$, where $\pi : G \to G/\Gamma^{\perp}$ is the canonical projection. Then also the associated dual measures $\mu_{\widehat{G}}$, $\mu_{\Gamma}$ and $\mu_{\widehat{G}/\Gamma}$ on $\widehat{G}$, $\Gamma \cong \widehat{G / \Gamma^{\perp}}$ and $\widehat{G}/\Gamma \cong \widehat{\Gamma^{\perp}}$ satisfy Weil's integral formula \eqref{eq:weil}, see, e.g., \cite[Theorem 5.5.12]{reiter2000classical}. Since $\Gamma^{\perp}$ is compact and equipped with normalized measure, the dual measure on $\widehat{G}/\Gamma$ is a counting measure. This implies that the dual measure $\mu_{\Gamma}$ on $\Gamma$  is the restriction of $\mu_{\widehat{G}}$ to $\Gamma$.  In particular, we have $\mu_{\Gamma} (\Omega) = \mu_{\widehat{G}} (\Omega)$. 

By Step 1, there exists a set $T_0 \subseteq G_0$ with uniform Beurling density in $G_0$,
\[
D_0 (T_0) \leq (1+\ve) \mu_{\widehat{G}} (\Omega),
\]
and such that
\begin{align*} 
A(\ve) \mu_{\widehat{G}} (\Omega) \| f \|^2 \leq \sum_{t \in T_0} |\langle f, e_{t} \rangle |^2 \leq B(\ve) \mu_{\widehat{G}} (\Omega) \| f \|^2 \quad \text{for all} \quad f \in L^2 (\Omega; \mu_{\widehat{G}}),
\end{align*} 
where the constants $A(\ve)$ and $B(\ve)$ are given as in \eqref{eq:frameconstants}.
The Fourier inversion formula for $f \in \PW_{\Omega} (G_0)$ gives
\[
f(x) = \int_{\Omega} \widehat{f} (\chi) \chi(x) \; d\mu_{\Gamma} (\chi) = \langle \widehat{f}, e_x \rangle, \quad x \in G_0.
\]
Hence, the above inequalities can be rewritten as
\begin{align} \label{eq:PWG0}
A(\ve) \mu_{\widehat{G}} (\Omega) \| f \|_{L^2 (G_0)}^2 \leq \sum_{t \in T_0} |f(t) |^2 \leq B(\ve) \mu_{\widehat{G}} (\Omega) \| f \|_{L^2 (G_0)}^2 
\end{align}
for all $f \in \PW_{\Omega} (G_0)$.

Let $T \subseteq G$ be a set of representatives for $T_0 \subseteq G/\Gamma^{\perp}$.  Since each $f \in \PW_{\Omega} (G)$ is $\Gamma^{\perp}$-periodic by Lemma \ref{lem:notcompactlygenerated} and $\Omega \subseteq \widehat{G_0}$, it can be treated as a function $f \in \PW_{\Omega} (G_0)$, and
\begin{align*} 
\sum_{t \in T} |f(t)|^2 = \sum_{t \in T_0} |f (t)|^2.
\end{align*}
Note that the $\Gamma^{\perp}$-periodicity implies that this identity is independent of the choice of representatives. Moreover, it follows by Weil's integral formula \eqref{eq:weil}  that $\| f \|^2_{L^2(G)} = \| f \|^2_{L^2 (G_0)}$. Hence, the sampling inequalities \eqref{eq:inequalities3} follow directly from \eqref{eq:PWG0}.

Lastly, we prove that $D(T) = D_0 (T_0)$. For this, let $(K_n)_{n \in \mathbb{N}}$ be a strong F\o lner sequence in $G_0 = G/\Gamma^{\perp}$. Then $(\pi^{-1} (K_n))_{n \in \mathbb{N}}$ is a strong F\o lner sequence in $G$ by Lemma \ref{lem:folnerquotient}. Hence, for $x \in G$,
\begin{align*}
\frac{\# (T_0 \cap ((x + \Gamma^{\perp}) + K_n))}{\mu_{G_0} (K_n)} &= \frac{\# (T_0 \cap  (x +  \pi^{-1} (K_n)))}{\mu_{G} (\pi^{-1} (K_n))} 
= \frac{\# (T \cap  (x +  \pi^{-1} (K_n)))}{\mu_{G} (\pi^{-1} (K_n))},
\end{align*}
which easily yields that $D(T) = D_0 (T_0)$. 
\end{proof}

\section*{Acknowledgements}
The first author was partially supported by NSF grant DMS-2349756.
For J.~v.~V., this research was funded in whole or in part by the Austrian
Science Fund (FWF): 10.55776/J4555 and 10.55776/PAT2545623. 

\bibliographystyle{abbrv}
\bibliography{bib}

\end{document}